\newtheorem{thm}{Theorem}[section]
\newtheorem{lem}[thm]{Lemma}
\newtheorem{remark}[thm]{Remark}
\numberwithin{equation}{section}
\newenvironment{proof}[1][Proof]{\noindent\textbf{#1.} }{\ \rule{0.5em}{0.5em}}
\begin{document}

\title{Linear Inviscid Damping for Couette Flow in Stratified Fluid}
\author{Jincheng Yang \\
School of Mathematics and Statistics\\
Xi'an Jiaotong University\\
Xi'an, 710049, China \and Zhiwu Lin \\
School of Mathematics\\
Georgia Institute of Technology\\
Atlanta, GA 30332-0160, USA}
\date{}
\maketitle

\begin{abstract}
We study the inviscid damping of Couette flow with an exponentially
stratified density. The optimal decay rates of the velocity field and the
density are obtained for general perturbations with minimal regularity. For
Boussinesq approximation model, the decay rates we get are consistent with
the previous results in the literature. We also study the decay rates for
the full Euler equations of stratified fluids, which were not studied
before. For both models, the decay rates depend on the Richardson number in
a very similar way. Besides, we also study the dispersive decay due to the
exponential stratification when there is no shear.
\end{abstract}

\section{Introduction}

Couette flow in exponentially stratified fluid is a shear flow $U(y)=Ry$
with the density profile $\rho _{0}(y)=Ae^{-\beta y}$. The stability of such
a flow was first studied by Taylor (\cite{taylor31}) in the half space by
the method of normal modes. He presented a convincing but somewhat incomplete
analysis to show that the spectrum of the linearized equation (now called
Taylor-Goldstein equation) is quite different when the Richardson number $%
B^{2}=\frac{\beta g}{R^{2}}$ ($g$ is the gravitational constant) is greater or
less than $1/4.$ He found that there exist infinitely many discrete neutral
eigenvalues when $B^{2}>\frac{1}{4}$ and no such neutral eigenvalues exist
when $B^{2}<\frac{1}{4}$. This claim was later proved by Dyson (\cite{dyson}%
) and Dikii (\cite{dikii-roots}). However, Taylor did not provide a clear
answer to the problem of stability of Couette flow. From 1950s, there have
been lots of work trying to understand the stability of stratified Couette
flow, by studying the initial value problem. They include H\o iland (\cite%
{hoiland53}), Eliassen et al. (\cite{eliassen}), Case (\cite{case-Euler-decay}), Dikii (\cite{dikii60}), Kuo (\cite{kuo}), Hartman (\cite{hartman74}), Chimonas (%
\cite{chimonas79}), Brown and Stewartson (\cite{brown-stewartson80}),
Farrell and Ioannou (\cite{farrel93}). We refer to Section 3.2.3 of the book
of Yaglom (\cite{Yaglom12}) for a detailed survey of the literature. Most of
the papers used the Boussinesq approximation. One exception is Dikii (\cite%
{dikii60}), where he proved the Liapunov stability of Couette flow in the
half space for the full stratified Euler equations, and for any $B^{2}>0$.
We note that for the exponentially stratified fluid (i.e. $\rho
_{0}(y)=Ae^{-\beta y}$), the Boussinesq approximation is valid only when $%
\beta $ is small. One interesting result following from the initial value
approach is the inviscid damping of velocity fields. Such inviscid damping
phenomena was known by Orr (\cite{orr}) in 1907, where the Couette flow in a
homogeneous fluid was considered. Orr showed that the horizontal and
vertical velocities decay by $t^{-1}$ and $t^{-2}$ respectively. Such
damping is not due to the viscosity, but instead is due to the mixing of the
vorticity under the Couette flow. In recent years, the inviscid damping
phenomena attracted new attention. In \cite{Lin-zeng}, Lin and Zeng showed
that if we consider initial (vorticity) perturbation in the Sobolev space $%
H^{s}$ $\left( s<\frac{3}{2}\right) $ then the nonlinear damping is not true
due to the existence of nonparallel steady flows of the form of Kelvin's
cats eye near Couette. In \cite{bedrossian-masmoudi}, Bedrossian and
Masmoudi proved the nonlinear inviscid damping for perturbations near
Couette in Gevrey class (i.e. almost analytic). The linear inviscid damping
for more general shear flows in a homogeneous fluid were also studied in
\cite{zillinger-arma} \cite{wei-zhang-zhao}.

In this paper, our goal is to get the precise estimates of linear decay
rates for Couette flow in exponentially stratified fluid, which might be
useful in the future study of nonlinear damping. We restrict ourselves to
the case in the whole space. The including of the boundary (half space,
finite channel) causes additional complication, as can be seen from Taylor's
results mentioned at the beginning.

Our first result is about the linear decay estimates for solutions of the
linearized equations under Boussinesq approximation. Consider the steady
shear flow $\boldsymbol{v}_{0}=(Ry,0)\ $with an exponentially stratified
density profile $\rho _{0}(y)=Ae^{-\beta y}$, where $R\in \mathbb{R},A >
0,\beta \geq 0$ are constants. Denote $B^{2}=\frac{\beta g}{R^{2}}$ to be
the Richardson number. When $\beta $ is small, we approximate $\rho _{0}(y)$
by $A\left( 1-\beta y\right) $ and the linearized equations under the
Boussinesq approximation (see Section 2.1) is
\begin{equation}
\left( \partial _{t}+Ry\partial _{x}\right) \Delta \psi =-\partial
_{x}\left( \frac{\rho }{A}\right) g,  \label{eqn-stream-Bou}
\end{equation}%
\begin{equation}
\ \left( \partial _{t}+Ry\partial _{x}\right) \left( \frac{\rho }{A}\right)
=\beta \partial _{x}\psi ,  \label{eqn-density-bou}
\end{equation}%
where $\psi $ and $\frac{\rho }{A}$ are the perturbations of stream function
and relative density variation.

\begin{thm}
\label{thm-boussinesq} Let $\left( \psi (t;x,y),\frac{\rho }{A}%
(t;x,y)\right) $ be the solution of (\ref{eqn-stream-Bou})-(\ref%
{eqn-density-bou}) with the initial data
\begin{equation*}
\psi (0;x,y)=\psi ^{0}(x,y),\ \ \frac{\rho (0;x,y)}{A}=\rho ^{0}(x,y),
\end{equation*}%
where $y\in \mathbb{R}$ and $x$ is periodic with period $L$. Denote the
velocity $\boldsymbol{v}=\nabla ^{\perp }\psi =\left( v^{x},v^{y}\right) $.
Below, $f\lesssim g$ stands for $f\leq Cg$ for a constant C depending only
on $R,\beta ,g.\ $We denote $\left\langle f\right\rangle :=\sqrt{1+f^{2}}$
and $P_{\neq 0}$ to be the projection to nonzero Fourier modes (in $x$),
that is,
\begin{equation*}
P_{\neq 0}f(t;x,y)=f(t;x,y)-\frac{1}{L}\int_{0}^{L}f(t;x,y)dx.
\end{equation*}%
The following estimates hold true:

(i) If $0<B^{2}<\frac{1}{4}$, let $\nu =\sqrt{\frac{1}{4}-B^{2}}$,$\ $then
\begin{align*}
\Vert P_{\neq 0}v^{x}\Vert _{L^{2}}& \lesssim \left\langle t\right\rangle ^{-%
\frac{1}{2}+\nu }\left( \Vert \psi ^{0}\Vert _{H_{x}^{1}H_{y}^{2}}+\Vert
\rho ^{0}\Vert _{L_{x}^{2}H_{y}^{1}}\right) , \\
\Vert v^{y}\Vert _{L^{2}}& \lesssim \left\langle t\right\rangle ^{-\frac{3}{2%
}+\nu }\left( \Vert \psi ^{0}\Vert _{H_{x}^{1}H_{y}^{3}}+\Vert \rho
^{0}\Vert _{L_{x}^{2}H_{y}^{2}}\right) , \\
\Vert P_{\neq 0}\frac{\rho }{A}\Vert _{L^{2}}& \lesssim \left\langle
t\right\rangle ^{-\frac{1}{2}+\nu }\left( \Vert \psi ^{0}\Vert
_{H_{x}^{1}H_{y}^{2}}+\Vert \rho ^{0}\Vert _{L_{x}^{2}H_{y}^{1}}\right) .
\end{align*}

(ii) If $B^{2}>\frac{1}{4}$ then
\begin{align*}
\Vert P_{\neq 0}v^{x}\Vert _{L^{2}}& \lesssim \left\langle t\right\rangle ^{-%
\frac{1}{2}}\left( \Vert \psi ^{0}\Vert _{H_{x}^{1}H_{y}^{2}}+\Vert \rho
^{0}\Vert _{L_{x}^{2}H_{y}^{1}}\right) , \\
\Vert v^{y}\Vert _{L^{2}}& \lesssim \left\langle t\right\rangle ^{-\frac{3}{2%
}}\left( \Vert \psi ^{0}\Vert _{H_{x}^{1}H_{y}^{3}}+\Vert \rho ^{0}\Vert
_{L_{x}^{2}H_{y}^{2}}\right) , \\
\Vert P_{\neq 0}\frac{\rho }{A}\Vert _{L^{2}}& \lesssim \left\langle
t\right\rangle ^{-\frac{1}{2}}\left( \Vert \psi ^{0}\Vert
_{H_{x}^{1}H_{y}^{2}}+\Vert \rho ^{0}\Vert _{L_{x}^{2}H_{y}^{1}}\right) .
\end{align*}

(iii) If $B^{2}=\frac{1}{4}$, then%
\begin{align*}
\Vert P_{\neq 0}v^{x}\Vert _{L^{2}}& \lesssim \left\langle t\right\rangle ^{-%
\frac{1}{2}}\left\langle \log \left\langle t\right\rangle \right\rangle
\left( \Vert \psi ^{0}\Vert _{H_{x}^{1}H_{y}^{2}}+\Vert \rho ^{0}\Vert
_{L_{x}^{2}H_{y}^{1}}\right) , \\
\Vert v^{y}\Vert _{L^{2}}& \lesssim \left\langle t\right\rangle ^{-\frac{3}{2%
}}\left\langle \log \left\langle t\right\rangle \right\rangle \left( \Vert
\psi ^{0}\Vert _{H_{x}^{1}H_{y}^{3}}+\Vert \rho ^{0}\Vert
_{L_{x}^{2}H_{y}^{2}}\right) , \\
\Vert P_{\neq 0}\frac{\rho }{A}\Vert _{L^{2}}& \lesssim \left\langle
t\right\rangle ^{-\frac{1}{2}}\left\langle \log \left\langle t\right\rangle
\right\rangle \left( \Vert \psi ^{0}\Vert _{H_{x}^{1}H_{y}^{2}}+\Vert \rho
^{0}\Vert _{L_{x}^{2}H_{y}^{1}}\right) .
\end{align*}

(iv) If $B^{2}=0$, i.e., $\beta =0$, then $\left\Vert \frac{\rho }{A}%
\right\Vert _{L^{2}}\left( t\right) =\Vert \rho ^{0}\Vert _{L^{2}}$ and
\begin{align*}
\Vert P_{\neq 0}v^{x}\Vert _{L^{2}}& \lesssim \Vert \rho ^{0}\Vert
_{L_{x}^{2}H_{y}^{1}}+\left\langle t\right\rangle ^{-1}\Vert \psi ^{0}\Vert
_{H_{x}^{1}H_{y}^{3}}, \\
\Vert v^{y}\Vert _{L^{2}}& \lesssim \left\langle t\right\rangle ^{-1}\Vert
\rho ^{0}\Vert _{L_{x}^{2}H_{y}^{2}}+\left\langle t\right\rangle ^{-2}\Vert
\psi ^{0}\Vert _{H_{x}^{1}H_{y}^{4}}.
\end{align*}

(v) If $B^{2}=\infty $, i.e. $R=0$, then $\frac{g}{\beta }\left\Vert \frac{%
\rho }{A}\right\Vert _{L^{2}}^{2}+\left\Vert \boldsymbol{v}\right\Vert
_{L^{2}}^{2}\ $is conserved. The following decay estimates hold true in $%
L_{x}^{2}L_{y}^{\infty }$,
\begin{align*}
\Vert P_{\neq 0}v^{x}\Vert _{L_{x}^{2}L_{y}^{\infty }}& \lesssim |t|^{-\frac{%
1}{3}}\left( \Vert \psi ^{0}\Vert _{H_{x}^{3/2}\left( H_{y}^{9/2}\cap
W_{y}^{1,1}\right) }+\Vert \rho ^{0}\Vert _{H_{x}^{1/2}\left(
H_{y}^{9/2}\cap W_{y}^{1,1}\right) }\right) , \\
\Vert v^{y}\Vert _{L_{x}^{2}L_{y}^{\infty }}& \lesssim |t|^{-\frac{1}{3}%
}\left( \Vert \psi ^{0}\Vert _{H_{x}^{5/2}\left( H_{y}^{7/2}\cap
L_{y}^{1}\right) }+\Vert \rho ^{0}\Vert _{H_{x}^{3/2}\left( H_{y}^{7/2}\cap
L_{y}^{1}\right) }\right) , \\
\Vert P_{\neq 0}\frac{\rho }{A}\Vert _{L_{x}^{2}L_{y}^{\infty }}& \lesssim
|t|^{-\frac{1}{3}}\left( \Vert \psi ^{0}\Vert _{H_{x}^{5/2}\left(
H_{y}^{9/2}\cap W_{y}^{1,1}\right) }+\Vert \rho ^{0}\Vert
_{H_{x}^{3/2}\left( H_{y}^{7/2}\cap L_{y}^{1}\right) }\right) .
\end{align*}
\end{thm}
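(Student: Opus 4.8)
The plan is to diagonalise the evolution by a double Fourier transform and reduce the whole problem, for every nonzero $x$-frequency, to a single universal scalar ODE whose large-argument asymptotics encode the dependence on $B^{2}$. First I would take the Fourier series in $x$ (wavenumbers $k\in\frac{2\pi}{L}\mathbb{Z}$); the mode $k=0$ is trivial, since then $\partial_{t}\hat\omega_{0}=0=\partial_{t}\widehat{(\rho/A)}_{0}$, $v^{y}$ has no $k=0$ component, and $P_{\neq0}$ removes exactly the parts that need not decay. For $k\neq0$, write $\omega=\Delta\psi$, Fourier transform also in $y$ (dual variable $\eta$), so that $Ry\partial_{x}\mapsto-kR\partial_{\eta}$, and pass to the characteristic variable $\xi=\eta+kRt$ and the vorticity profile $W(t,k,\xi)=\tilde\omega(t,k,\xi-kRt)$. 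The system becomes $\partial_{t}W=-ikg\,S$, $\partial_{t}S=-ik\beta\,W/(k^{2}+(\xi-kRt)^{2})$ (with $S$ the profile of $\rho/A$), hence $\partial_{t}^{2}W=-k^{2}g\beta\,W/(k^{2}+(\xi-kRt)^{2})$; introducing $\tau:=Rt-\xi/k$ this is the universal ODE $W_{\tau\tau}+\frac{B^{2}}{1+\tau^{2}}W=0$, with data at $\tau_{0}=-\xi/k$ built from $\tilde\omega^{0},\tilde\rho^{0}$ and solution to be read off at $\tau=Rt+\tau_{0}$. Because $\tilde\psi=-\tilde\omega/(k^{2}+\eta^{2})$ and the ODE-time attached to $(k,\eta)$ at physical time $t$ is exactly $\tau=-\eta/k$ (so $k^{2}+\eta^{2}=k^{2}(1+\tau^{2})$), Plancherel in $y$ yields $\|v^{y}(t,k,\cdot)\|_{L^{2}_{y}}^{2}\sim|k|^{-1}\int(1+\tau^{2})^{-2}|W(\tau)|^{2}\,d\tau$, $\|v^{x}(t,k,\cdot)\|_{L^{2}_{y}}^{2}\sim|k|^{-1}\int\tau^{2}(1+\tau^{2})^{-2}|W(\tau)|^{2}\,d\tau$, and $\|(\rho/A)(t,k,\cdot)\|_{L^{2}_{y}}^{2}\sim|k|^{-1}\int|W_{\tau}(\tau)|^{2}\,d\tau$, where in each integrand $W$ is the solution of the universal ODE started at $\tau_{0}=\tau-Rt$.

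Next I would analyse that ODE. On $|\tau|\lesssim1$ the coefficient is smooth and bounded, so the solution operator there is a fixed invertible matrix depending only on $B^{2}$ (this includes the connection through $\tau=0$). On $|\tau|\gg1$ the equation is an $O(|\tau|^{-4})$ perturbation of the Euler equation $W''+B^{2}\tau^{-2}W=0$, so Levinson-type asymptotic integration gives two solutions $|\tau|^{1/2\pm\nu}(1+O(\tau^{-2}))$ when $0<B^{2}<\tfrac14$, replaced by $|\tau|^{1/2}$ and $|\tau|^{1/2}\log|\tau|$ when $B^{2}=\tfrac14$, and by $|\tau|^{1/2}e^{\pm i\mu\log|\tau|}$ (both of modulus $|\tau|^{1/2}$) when $B^{2}>\tfrac14$, $\mu=\sqrt{B^{2}-\tfrac14}$; the case $\beta=0$ is simply $W=a+b\tau$. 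Assembling the Euler asymptotics with the $|\tau|\lesssim1$ transition matrix gives, with explicit constants, the map sending the data at $\tau_{0}$ to $(W(\tau),W_{\tau}(\tau))$ at any other $\tau$; in particular, when $|\tau_{0}|\gg1$ one gains the factor $\langle\tau_{0}\rangle^{-1/2+\nu}$ off the initial-vorticity part and $\langle\tau_{0}\rangle^{1/2+\nu}$ off the initial-density part, times $\langle\tau\rangle^{1/2+\nu}$ (with logarithms, resp.\ modulus $\langle\tau\rangle^{1/2}$, when $B^{2}=\tfrac14$, resp.\ $>\tfrac14$).

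Third, I would insert these bounds into the Plancherel formulas and split the $\tau$-integral into the region $|\tau|\lesssim1$ and the region $\tau\approx Rt$, the remaining part being better. On $|\tau|\lesssim1$ one has $\tau_{0}\approx-Rt$, so the data are sampled at $y$-frequency $\sim kRt$ and the assumed $H^{s}_{y}$ regularity converts into powers of $\langle t\rangle^{-1}$; this is precisely where $v^{y}$ must spend one more $y$-derivative on $\psi^{0}$ (through $\omega^{0}=\Delta\psi^{0}$) and on $\rho^{0}$ than $v^{x}$, because there the ODE solution carries the amplification factor $\langle t\rangle^{1/2+\nu}$ with no help from the weight. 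On $\tau\approx Rt$ one has $\tau_{0}\approx0$, so the data sit at bounded frequency but are only controlled in $L^{2}_{\tau_{0}}$, i.e.\ by their $L^{2}_{y}$ norm; this is the slowest region for $v^{x}$, whose weight $\tau^{2}(1+\tau^{2})^{-2}$ is only logarithmically localised, and it produces $\langle t\rangle^{-1/2+\nu}$. Tracking the powers of $t$ through both regions and summing over $k$ (using $|k|\ge2\pi/L$ to convert powers of $|k|$ into the stated $H^{s}_{x}$ norms) reproduces (i)–(iv): the logarithm in (iii) is exactly the $\log|\tau|$ solution at $B^{2}=\tfrac14$, and the loss of $\nu$ in (ii) is the modulus $|\tau|^{1/2}$ of the oscillatory solutions. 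For (v), $R=0$, there is no shear: after Fourier in $x$ and $y$ one has $\partial_{t}^{2}\hat\omega=-\Omega(k,\eta)^{2}\hat\omega$ with $\Omega(k,\eta)=\sqrt{g\beta}\,|k|/\sqrt{k^{2}+\eta^{2}}$, so $v$ and $\rho/A$ are superpositions of $e^{\pm i\Omega t}$ and the conserved quantity is immediate; since $\partial_{\eta}^{2}\Omega$ vanishes exactly at $\eta=\pm k/\sqrt2$, these are nondegenerate order-three critical points of the phase $\eta y\pm\Omega(k,\eta)t$, and van der Corput / stationary phase in $\eta$ gives the dispersive rate $|t|^{-1/3}$, the $W^{1,1}_{y}\cap H^{\text{large}}_{y}$-type norms being exactly what the van der Corput lemma needs from the amplitude (boundedness, one integrable derivative, and enough high-frequency decay).

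The main obstacle is the second step together with the sharp bookkeeping in the third: one needs the Euler-perturbation asymptotics of $W_{\tau\tau}+\frac{B^{2}}{1+\tau^{2}}W=0$ with remainders uniform enough — including uniformity in the starting point $\tau_{0}$ and across the connection at $\tau=0$ — that they survive being squared and integrated in $\tau$ without losing any power of $t$. This is most delicate for $v^{x}$, where the decay is slowest and the relevant $\tau$-integral only logarithmically convergent, so that the data must be accounted for with the correct weight $\langle\tau_{0}\rangle^{1\pm2\nu}$ on the whole range $\langle\tau_{0}\rangle\in[1,Rt]$ rather than estimated crudely.
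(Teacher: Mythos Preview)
Your plan follows essentially the same architecture as the paper: Fourier transform in shear coordinates, reduction to a single second-order ODE per $(k,\eta)$, large-argument asymptotics of that ODE, and a frequency splitting (the paper's Lemma~4.1 is exactly your $|\tau|\lesssim1$ versus $\tau\approx Rt$ dichotomy) to turn the pointwise bounds into $L^{2}$ decay; part (v) is handled by Van der Corput in both.

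The one genuine technical difference is how the ODE is solved. The paper works with the stream-function profile $\hat\phi$, whose equation $(1+s^{2})\hat\phi_{tt}+4s\hat\phi_{t}+(2+B^{2})\hat\phi=0$ reduces, after $u=-s^{2}$, to Euler's hypergeometric equation; the two solutions are then explicit hypergeometric functions $g_{1},g_{2}$ (your vorticity ODE $W_{\tau\tau}+\frac{B^{2}}{1+\tau^{2}}W=0$ is equivalent via $W=k^{2}(1+s^{2})\hat\phi$). This buys two things your Levinson-type argument has to earn by hand: the asymptotic expansions at $|s|\to\infty$ are classical formulas for $F(a,b;c;-s^{2})$ with uniform $O(|s|^{-2})$ remainders and no transition-matrix issue to track across $s=0$, and the Wronskian is computed exactly as $\Delta=(1+s_{0}^{2})^{-2}$, which immediately gives the sharp $\langle s_{0}\rangle$-weights on $C_{1},C_{2}$. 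Your approach is more elementary in that it avoids special functions altogether, but the uniformity of the asymptotic remainders in the starting point $\tau_{0}$ --- precisely the obstacle you flagged --- is what the hypergeometric representation makes automatic.
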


Theorem \ref{thm-boussinesq} gives a complete picture of the linear damping
for the Couette flow in an exponentially stratified fluid in an infinite
channel (i.e. $-\infty <y<+\infty $ and $x$ periodic). More specifically, we
obtain optimal decay rates for initial perturbations of minimal regularity.
We make some comments to relate our results to the previous works on this
problem. When $B^{2}>\frac{1}{4}$, the decay rates $t^{-\frac{3}{2}}$ for $%
v^{y}$ and $t^{-\frac{1}{2}}$ for $v^{x}$ were obtained by Booker and
Bretherton (\cite{booker67}) for a special class of solutions, which
generalized the earlier results in \cite[Chap. 5]{phillips} for $B^{2}\gg 1$%
. In \cite{hartman74}, the decay rates as in Theorem \ref{thm-boussinesq}
(i)-(iii) were obtained for special solutions by hypergeometric functions,
which are similar to $g_{1},g_{2}$ defined in (\ref{g_1}) and (\ref{g_2}).
However, it was not shown in \cite{hartman74} that general solutions can be
expressed by these special solutions. Chimonas (\cite{chimonas79})
considered the case $B^{2}<\frac{1}{4}$ and wrongly claimed that $v^{y}$
decays at the rate $t^{2\nu -1}$ and $v^{x}$ grows by $t^{2\nu }$. Later, an
error in \cite{chimonas79} was pointed out by Brown and Stewartson (\cite%
{brown-stewartson80}), where they also found the correct decay rates as in
Theorem \ref{thm-boussinesq}. In \cite{brown-stewartson80}, the initial
value problem was solved for analytic initial data by taking the Laplace
transform in time and then the decay rates were obtained from the asymptotic
analysis of the inverse Laplace transform of the solutions. Moreover, it was
assumed in \cite{brown-stewartson80} that the discrete neutral eigenvalues
do no exist, such that there are no poles in the Laplace transform of their
solutions. In our analysis, we do not need to assume the nonexistence of
discrete neutral eigenvalues, which actually follows as a corollary from the
decay estimates in Theorem \ref{thm-boussinesq} for any $B^{2}>0$. This
contrasts significantly with the case in the half space (\cite{taylor31}
\cite{dikii-roots} \cite{dyson}) or in a finite channel (\cite{eliassen}),
where it was shown that there exist infinitely many discrete neutral
eigenvalues when $B^{2}>\frac{1}{4}$. In Theorem \ref{thm-boussinesq}, the
decay rates are optimal with the minimal regularity requirement for the
initial data. In particular, when $B^{2}<\infty \ $it suffices to have the
initial perturbations of vorticity and density variation $\omega \left(
0\right) ,\rho ^{0}\in H^{1}\ $to get the optimal decay for $\left\Vert
v^{x}\right\Vert _{L^{2}}$, and $\omega \left( 0\right) ,\rho ^{0}\in H^{2}$
to get the optimal decay for $\left\Vert v^{y}\right\Vert _{L^{2}}$. These
minimal regularity requirement on the initial data are consistent with the
results in \cite{Lin-zeng} for the Couette flow with constant density.
Moreover, if $B\rightarrow 0+$ (i.e. $\nu \rightarrow \frac{1}{2}-$), the
decay rates for the horizontal and vertical velocities are $t^{-\frac{1}{2}%
+\nu }$ and $t^{-\frac{3}{2}+\nu}$ respectively even when $\rho ^{0}=0$, which
are almost one order slower than the rates ($t^{-1}$ and $t^{-2}$
respectively) for homogeneous fluids (i.e. $B=0$). This suggests that the
stratified effects cannot be ignored even when the steady density is a small
deviation of the constant.

The decay rate $t^{-\frac{1}{3}\ }$for the case $B^{2}=\infty $ (i.e. no
shear flow) is optimal (see the example at the end of Section 6.1). When $%
\left( x,y\right) \in \mathbb{R}^{2}$, the optimal decay rate was shown to
be $t^{-\frac{1}{2}}$ in \cite{elgindi-sima}. We note that the decay
mechanisms are very different for the cases of $B^{2}=\infty $ and $%
B^{2}<\infty $. When $B^{2}<\infty $, the decay of $\left\Vert \boldsymbol{v}%
\right\Vert _{L^{2}}$ is due to the mixing of vorticity caused by the shear
motion. When $B^{2}=\infty $, $\left\Vert \boldsymbol{v}\right\Vert _{L^{2}}$
does not decay while the decay of $\left\Vert \boldsymbol{v}\right\Vert
_{L^{\infty }}$ is due to dispersive effects of the linear waves in a stably
stratified fluid.

Most papers on Couette flow used the Boussinesq approximation to analyze the
linearized solutions. However, this approximation is valid only when $\beta $
is small. For $\beta $ not small, the full Euler equations should be used.
In this case, the linearized equatuions at the Couette flow $\left(
Ry,0\right) $ with the exponential density profile $\rho _{0}(y)=Ae^{-\beta
y}$ become
\begin{equation}
\beta \left[ R\partial _{x}-\left( \partial _{t}+Ry\partial _{x}\right)
\partial _{y}\right] \psi +\left( \partial _{t}+Ry\partial _{x}\right)
\Delta \psi =-\partial _{x}\left( \frac{\rho }{\rho _{0}}\right) g,
\label{eqn-stream-full}
\end{equation}%
\begin{equation}
\left( \partial _{t}+Ry\partial _{x}\right) \left( \frac{\rho }{\rho _{0}}%
\right) =\beta \partial _{x}\psi .  \label{eqn-density-full}
\end{equation}%
We obtain similar results on decay estimates in the $e^{-\frac{1}{2}\beta y}$
weighted norms.

\begin{thm}
\label{thm-full-euler}Let $\left( \psi (t;x,y),\frac{\rho }{\rho _{0}}%
(t;x,y)\right) $ be the solution of (\ref{eqn-stream-full})-(\ref%
{eqn-density-full}) with the initial data
\begin{equation*}
\psi (0;x,y)=\psi ^{0}(x,y),\ \ \frac{\rho (0;x,y)}{\rho _{0}(y)}=\rho
^{0}(x,y),
\end{equation*}%
where $y\in \mathbb{R}$ and $x$ is periodic with period $L$. Let $%
\boldsymbol{v}=\nabla ^{\perp }\psi =\left( v^{x},v^{y}\right) $. The
following is true: 

(i) If $0<B^{2}<\frac{1}{4}$, let $\nu =\sqrt{\frac{1}{4}%
-B^{2}}$, then
\begin{align*}
\Vert e^{-\frac{1}{2}\beta y}P_{\neq 0}v^{x}\Vert _{L^{2}}& \lesssim
\left\langle t\right\rangle ^{-\frac{1}{2}+\nu }\left( \Vert e^{-\frac{1}{2}%
\beta y}\psi ^{0}\Vert _{H_{x}^{1}H_{y}^{2}}+\Vert e^{-\frac{1}{2}\beta
y}\rho ^{0}\Vert _{L_{x}^{2}H_{y}^{1}}\right) , \\
\Vert e^{-\frac{1}{2}\beta y}v^{y}\Vert _{L^{2}}& \lesssim \left\langle
t\right\rangle ^{-\frac{3}{2}+\nu }\left( \Vert e^{-\frac{1}{2}\beta y}\psi
^{0}\Vert _{H_{x}^{1}H_{y}^{3}}+\Vert e^{-\frac{1}{2}\beta y}\rho ^{0}\Vert
_{L_{x}^{2}H_{y}^{2}}\right) , \\
\Vert e^{-\frac{1}{2}\beta y}P_{\neq 0}\rho /\rho _{0}\Vert _{L^{2}}&
\lesssim \left\langle t\right\rangle ^{-\frac{1}{2}+\nu }\left( \Vert e^{-%
\frac{1}{2}\beta y}\psi ^{0}\Vert _{H_{x}^{1}H_{y}^{2}}+\Vert e^{-\frac{1}{2}%
\beta y}\rho ^{0}\Vert _{L_{x}^{2}H_{y}^{1}}\right) .
\end{align*}

(ii) If $B^{2}>\frac{1}{4}$ then
\begin{align*}
\Vert e^{-\frac{1}{2}\beta y}P_{\neq 0}v^{x}\Vert _{L^{2}} &\lesssim
\left\langle t\right\rangle ^{-\frac{1}{2}}\left( \Vert e^{-\frac{1}{2}\beta
y}\psi ^{0}\Vert _{H_{x}^{1}H_{y}^{2}}+\Vert e^{-\frac{1}{2}\beta y}\rho
^{0}\Vert _{L_{x}^{2}H_{y}^{1}}\right) , \\
\Vert e^{-\frac{1}{2}\beta y}v^{y}\Vert _{L^{2}} &\lesssim \left\langle
t\right\rangle ^{-\frac{3}{2}}\left( \Vert e^{-\frac{1}{2}\beta y}\psi
^{0}\Vert _{H_{x}^{1}H_{y}^{3}}+\Vert e^{-\frac{1}{2}\beta y}\rho ^{0}\Vert
_{L_{x}^{2}H_{y}^{2}}\right) , \\
\Vert e^{-\frac{1}{2}\beta y}P_{\neq 0}\rho /\rho _{0}\Vert _{L^{2}}
&\lesssim \left\langle t\right\rangle ^{-\frac{1}{2}}\left( \Vert e^{-\frac{1%
}{2}\beta y}\psi ^{0}\Vert _{H_{x}^{1}H_{y}^{2}}+\Vert e^{-\frac{1}{2}\beta
y}\rho ^{0}\Vert _{L_{x}^{2}H_{y}^{1}}\right) .
\end{align*}

(iii) If $B^{2}=\frac{1}{4}$, then
\begin{align*}
\Vert e^{-\frac{1}{2}\beta y}P_{\neq 0}v^{x}\Vert _{L^{2}} &\lesssim
\left\langle t\right\rangle ^{-\frac{1}{2}}\left\langle \log \left\langle
t\right\rangle \right\rangle \left( \Vert e^{-\frac{1}{2}\beta y}\psi
^{0}\Vert _{H_{x}^{1}H_{y}^{2}}+\Vert e^{-\frac{1}{2}\beta y}\rho ^{0}\Vert
_{L_{x}^{2}H_{y}^{1}}\right) , \\
\Vert e^{-\frac{1}{2}\beta y}v^{y}\Vert _{L^{2}} &\lesssim \left\langle
t\right\rangle ^{-\frac{3}{2}}\left\langle \log \left\langle t\right\rangle
\right\rangle \left( \Vert e^{-\frac{1}{2}\beta y}\psi ^{0}\Vert
_{H_{x}^{1}H_{y}^{3}}+\Vert e^{-\frac{1}{2}\beta y}\rho ^{0}\Vert
_{L_{x}^{2}H_{y}^{2}}\right) , \\
\Vert e^{-\frac{1}{2}\beta y}P_{\neq 0}\rho /\rho _{0}\Vert _{L^{2}}
&\lesssim \left\langle t\right\rangle ^{-\frac{1}{2}}\left\langle \log
\left\langle t\right\rangle \right\rangle \left( \Vert e^{-\frac{1}{2}\beta
y}\psi ^{0}\Vert _{H_{x}^{1}H_{y}^{2}}+\Vert e^{-\frac{1}{2}\beta y}\rho
^{0}\Vert _{L_{x}^{2}H_{y}^{1}}\right) .
\end{align*}

(iv) If $B^{2}=0$, i.e, $\beta =0$, then the results are the same as in the
Boussinesq case, with $\rho /\rho _{0}$ replacing $\frac{\rho }{A}$.

(v) If $B^{2}=\infty $, i.e. $R=0$, then
\begin{equation*}
\left\Vert e^{-\frac{1}{2}\beta y} \boldsymbol{v} \right\Vert _{L^{2}}^{2}+%
\frac{g}{\beta }\left\Vert e^{-\frac{1}{2}\beta y}\frac{\rho }{\rho _{0}}%
\right\Vert _{L^{2}}^{2}
\end{equation*}%
is conserved and
\begin{align*}
\Vert e^{-\frac{1}{2}\beta y}P_{\neq 0}v^{x}\Vert _{L_{x}^{2}L_{y}^{\infty
}} \lesssim& |t|^{-\frac{1}{3}}\left( \Vert e^{-\frac{1}{2}\beta y}\psi
^{0}\Vert _{H_{x}^{3/2}\left( H_{y}^{9/2}\cap W_{y}^{1,1}\right) } \right. \\
&\left. +\Vert e^{-\frac{1}{2}\beta y}\rho ^{0}\Vert _{H_{x}^{1/2}\left(
H_{y}^{9/2}\cap W_{y}^{1,1}\right) }\right) , \\
\Vert e^{-\frac{1}{2}\beta y}v^{y}\Vert _{L_{x}^{2}L_{y}^{\infty }}
\lesssim& |t|^{-\frac{1}{3}}\left( \Vert e^{-\frac{1}{2}\beta y}\psi
^{0}\Vert _{H_{x}^{5/2}\left( H_{y}^{7/2}\cap L_{y}^{1}\right) } \right. \\
&\left. +\Vert e^{-\frac{1}{2}\beta y}\rho ^{0}\Vert _{H_{x}^{3/2}\left(
H_{y}^{7/2}\cap L_{y}^{1}\right) }\right) , \\
\Vert e^{-\frac{1}{2}\beta y}P_{\neq 0}\rho /\rho _{0}\Vert
_{L_{x}^{2}L_{y}^{\infty }} \lesssim& |t|^{-\frac{1}{3}}\left( \Vert e^{-%
\frac{1}{2}\beta y}\psi ^{0}\Vert _{H_{x}^{5/2}\left( H_{y}^{9/2}\cap
W_{y}^{1,1}\right) } \right. \\
&\left. +\Vert e^{-\frac{1}{2}\beta y}\rho ^{0}\Vert _{H_{x}^{3/2}\left(
H_{y}^{7/2}\cap L_{y}^{1}\right) }\right) .
\end{align*}
\end{thm}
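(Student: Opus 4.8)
We want to reduce the full Euler system \eqref{eqn-stream-full}--\eqref{eqn-density-full} to one with the same structure as the Boussinesq system \eqref{eqn-stream-Bou}--\eqref{eqn-density-bou}, so that the analysis behind Theorem~\ref{thm-boussinesq} transfers with only cosmetic changes. Write $\mathcal{T}=\partial_{t}+Ry\partial_{x}$ (skew-adjoint on $L^{2}(dx\,dy)$, with $[\mathcal{T},\partial_{y}]=-R\partial_{x}$), and introduce the weighted unknowns $\phi=e^{-\frac12\beta y}\psi$, $\sigma=e^{-\frac12\beta y}\rho/\rho_{0}$, which are exactly what the statement measures: an integration by parts in $y$ gives $\|e^{-\frac12\beta y}\boldsymbol v\|_{L^{2}}^{2}=\|\nabla\phi\|_{L^{2}}^{2}+\tfrac14\beta^{2}\|\phi\|_{L^{2}}^{2}$, and $e^{-\frac12\beta y}$ commutes with the shear change of variables used below. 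Using $-\beta\mathcal{T}\partial_{y}\psi+\mathcal{T}\Delta\psi=\mathcal{T}(\Delta-\beta\partial_{y})\psi$ together with the conjugation identity $e^{\frac12\beta y}\bigl(\Delta-\tfrac14\beta^{2}\bigr)e^{-\frac12\beta y}=\Delta-\beta\partial_{y}$, equations \eqref{eqn-stream-full}--\eqref{eqn-density-full} become, after multiplication by $e^{-\frac12\beta y}$,
\[
\mathcal{T}Q=-\beta R\,\partial_{x}\phi-g\,\partial_{x}\sigma,\qquad Q:=\bigl(\Delta-\tfrac14\beta^{2}\bigr)\phi,\qquad \mathcal{T}\sigma=\beta\,\partial_{x}\phi.
\]
The shift $\Delta\rightsquigarrow\Delta-\tfrac14\beta^{2}$ merely replaces the elliptic symbol $p(t):=k^{2}+(\eta-kRt)^{2}$ by $P(t):=p(t)+\tfrac14\beta^{2}$, which is invisible to the large-$t$ asymptotics; and the one genuinely new term $-\beta R\,\partial_{x}\phi$ equals $-R\,\mathcal{T}\sigma$, so with the modified vorticity $q:=Q+R\sigma$ we obtain
\[
\mathcal{T}q=-g\,\partial_{x}\sigma,\qquad \mathcal{T}\sigma=\beta\,\partial_{x}\phi,\qquad \bigl(\Delta-\tfrac14\beta^{2}\bigr)\phi=q-R\sigma,
\]
which is \emph{exactly} the Boussinesq system plus a single zero-order coupling $-R\sigma$ in the elliptic relation between $q$ and $\phi$.

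For parts (i)--(iii) ($0<B^{2}<\infty$, so $R\neq0$, $\beta>0$) I would run the Boussinesq argument line by line. Pass to the shear-adapted coordinates $X=x-Rty$, $Y=y$ (so $\mathcal{T}\mapsto\partial_{t}$) and take Fourier transforms in $X$ (modes $k=2\pi n/L\neq0$; the $k=0$ part is stationary, removed by $P_{\neq0}$, while $v^{y}=\partial_{x}\psi$ has none) and in $Y$ (dual $\eta$), so that $\Delta-\tfrac14\beta^{2}$ acts as $-P(t)$ with $P(t)\sim k^{2}R^{2}t^{2}$. Eliminating $\widehat\sigma$ from the resulting $2\times2$ nonautonomous ODE leaves the scalar equation
\[
\partial_{t}^{2}\widehat q+\frac{k^{2}\beta g}{P(t)}\,\widehat q=\frac{ikR\beta}{P(t)}\,\partial_{t}\widehat q,
\]
which is the Boussinesq equation $\partial_{t}^{2}\widehat\omega+\tfrac{k^{2}\beta g}{p(t)}\widehat\omega=0$ up to the harmless shift $p\to P$ and an additional first-order term whose coefficient is purely imaginary and $O(P(t)^{-1})=O(t^{-2})$; a unimodular integrating factor removes it at the cost of an absolutely-integrable ($O(t^{-3})$) perturbation of the potential, so the leading asymptotics are those of the Euler model $\ddot u+B^{2}t^{-2}u\approx0$ (using $\beta g=B^{2}R^{2}$), of size $\langle t\rangle^{\frac12+\nu}$, $\langle t\rangle^{\frac12}\langle\log\langle t\rangle\rangle$, or $\langle t\rangle^{\frac12}$ according as $B^{2}<\tfrac14$, $=\tfrac14$, or $>\tfrac14$, exactly as in Theorem~\ref{thm-boussinesq}. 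Dividing by $P(t)$ to recover $\widehat\phi$, inserting the symbols $ik$ (for $v^{y}=\partial_{x}\psi$) and $i(\eta-kRt)+\tfrac12\beta\sim ikRt$ (for $v^{x}=-\partial_{y}\psi$), integrating in $\eta$ and summing in $k$ against the stated $H^{1}_{x}H^{2}_{y}$, $H^{1}_{x}H^{3}_{y}$, $L^{2}_{x}H^{1}_{y}$, $L^{2}_{x}H^{2}_{y}$ norms — which coincide with the weighted norms of $(\phi^{0},\sigma^{0})$, hence of $(q^{0},\sigma^{0})$ — then gives (i)--(iii). Part (iv) is immediate, since $\beta=0$ turns \eqref{eqn-stream-full}--\eqref{eqn-density-full} literally into \eqref{eqn-stream-Bou}--\eqref{eqn-density-bou} with $\rho/\rho_{0}=\rho/A$.

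For part (v), $R=0$, so $\mathcal{T}=\partial_{t}$, no change of variables is needed, and $P=k^{2}+\eta^{2}+\tfrac14\beta^{2}$ is independent of $t$. Pairing $\mathcal{T}q=-g\,\partial_{x}\sigma$ with $\phi$ (using self-adjointness of $\Delta-\tfrac14\beta^{2}$) and $\mathcal{T}\sigma=\beta\,\partial_{x}\phi$ with $\tfrac{g}{\beta}\sigma$, the cross terms cancel after one integration by parts in $x$, so $\|e^{-\frac12\beta y}\boldsymbol v\|_{L^{2}}^{2}+\tfrac{g}{\beta}\|e^{-\frac12\beta y}\rho/\rho_{0}\|_{L^{2}}^{2}=-\langle q,\phi\rangle+\tfrac{g}{\beta}\|\sigma\|_{L^{2}}^{2}$ is conserved. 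On the Fourier side $\widehat q=\widehat Q$ solves $\partial_{t}^{2}\widehat q+\omega(k,\eta)^{2}\widehat q=0$ with the internal-wave relation $\omega(k,\eta)=|k|\sqrt{\beta g}\,(k^{2}+\eta^{2}+\tfrac14\beta^{2})^{-1/2}$, so the weighted velocity and density components $-\tfrac{ik}{P}\widehat q$, $\tfrac{i\eta+\frac12\beta}{P}\widehat q$, $\widehat\sigma$ are superpositions of $e^{\pm i\omega(k,\eta)t}$ times amplitudes that are smooth and decaying in $\eta$. For each fixed $k$ this is a one-dimensional oscillatory integral in $\eta$; one checks that $\partial_{\eta}^{2}\omega$ vanishes only, and simply, at $\eta=\pm\sqrt{(k^{2}+\tfrac14\beta^{2})/2}$, where $\partial_{\eta}^{3}\omega\neq0$, so Van der Corput's lemma yields the sharp $|t|^{-1/3}$ rate, non-stationary phase being faster elsewhere; the $W^{1,1}_{y}$ / $L^{1}_{y}$ norms absorb the low-$\eta$ and amplitude factors, and after taking $L^{\infty}_{y}$ the fractional $H^{s}_{x}$ norms control the sum over $k$ in $L^{2}_{x}$. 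Optimality of $|t|^{-1/3}$ follows from the explicit example at the end of Section~6.1.

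The main obstacle is the uniform-in-$(k,\eta)$ asymptotic analysis behind (i)--(iii): one must propagate the solution of $\partial_{t}^{2}\widehat q+\frac{k^{2}\beta g}{P(t)}\widehat q=(\text{l.o.t.})$ through the critical time $t\approx\eta/(kR)$, where $P$ attains its minimum $k^{2}+\tfrac14\beta^{2}$ and the coefficient is \emph{not} small, then match to the Euler model for $t\gg\eta/(kR)$ and identify which branch $\langle t\rangle^{1/2\pm\nu}$ the given data excite — the decay of $\boldsymbol v$ hinges on the faster-growing branch not dominating, and on the correction $-R\sigma$ (equivalently the first-order term above) not destroying this. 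This is precisely the delicate step already handled for the Boussinesq case, and the reduction above shows that for the full Euler equations nothing new arises beyond the shift $p(t)\to p(t)+\tfrac14\beta^{2}$ and an absolutely-integrable perturbation.
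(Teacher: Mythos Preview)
Your reduction is correct and elegant: the conjugation $e^{\frac12\beta y}(\Delta-\tfrac14\beta^{2})e^{-\frac12\beta y}=\Delta-\beta\partial_{y}$, the observation $-\beta R\partial_{x}\phi=-R\mathcal{T}\sigma$, and the modified vorticity $q=Q+R\sigma$ together give the clean system you wrote, and your derivation of $\partial_{t}^{2}\widehat q+\tfrac{k^{2}\beta g}{P(t)}\widehat q=\tfrac{ikR\beta}{P(t)}\partial_{t}\widehat q$ is right. For (iv) and (v) your outline coincides with the paper's (same weighted variables, same dispersion relation, same Van der Corput with degenerate stationary points at $\eta=\pm\sqrt{(k^{2}+\tfrac14\beta^{2})/2}$).

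For (i)--(iii), however, the paper takes a different and more direct route. It does \emph{not} treat the full Euler ODE as a perturbation of the Boussinesq one. Instead it observes that the equation for $\widehat\chi$ (your $\widehat\phi$ in shear variables) is \emph{itself} exactly an Euler hypergeometric equation, now with $k$-dependent parameter $c=2-\beta_{1}$, $\beta_{1}=\beta/(2\sqrt{k^{2}+\tfrac14\beta^{2}})$, and the same $a,b=\tfrac32\pm\nu$. It writes down the two explicit solutions $g_{3},g_{4}$, computes their Wronskian by Lemma~\ref{lemma-wronski} (getting $|\Delta|^{-1}\lesssim\langle s_{0}\rangle^{4}$ uniformly in $k$ since $|\kappa|,1-\beta_{1}$ are bounded away from $0$), and reads off the asymptotics of $g_{3},g_{4},g_{3}',g_{4}'$ at infinity. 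This yields pointwise bounds $|\widehat\chi|\lesssim\langle s\rangle^{-\frac32+\nu}\langle s_{0}\rangle^{\frac32+\nu}(|\widehat\Psi^{0}|+|\widehat\Upsilon^{0}|/(\langle s_{0}\rangle|k|))$ directly, and Lemma~\ref{lemma-decay} finishes.

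The gap in your sketch is that you defer the hard part---uniform-in-$(k,\eta)$ bounds with the precise $\langle s_{0}\rangle^{\frac32+\nu}$ dependence on the data---to ``the delicate step already handled for the Boussinesq case''. But the paper's Boussinesq proof is by explicit hypergeometric solutions $g_{1},g_{2}$, not by perturbative ODE asymptotics; there is no Levinson-type lemma in the paper to invoke. Your reduced equation is $\partial_{t}^{2}w+(V_{p}+O(\langle s\rangle^{-3}))w=0$ with $V_{p}=B^{2}/(1+s^{2})$ only \emph{after} you also absorb the shift $p\to P$, and turning ``absolutely integrable perturbation'' into the needed bounds requires a genuine variation-of-constants argument (using, say, $h_{i}=(1+s^{2})g_{i}$ as a basis for the vorticity equation, whose Wronskian is conveniently constant) together with Gr\"onwall, tracking both $|w|\lesssim\langle s\rangle^{\frac12+\nu}$ and $|\dot w|\lesssim\langle s\rangle^{-\frac12+\nu}$, and showing the constants pick up exactly $\langle s_{0}\rangle^{\frac32+\nu}$ from the data. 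This can be done, but it is real work you have not supplied; the paper sidesteps it entirely by recognising the exact hypergeometric structure.
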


Compared with Theorem \ref{thm-boussinesq}, it is interesting to note that
for the $e^{-\frac{1}{2}\beta y}$ weighted $v$ and $\rho $, the decay rates
and the initial regularity requirement for the full equations are exactly
the same as in the Boussinesq approximation.

Lastly, we make some comments on the proof. First, we use Fourier transform
on the linearized equations in the sheared coordinates and then reduce them
to a second order ODE for the stream function. The general solution is
expressed by two special solutions of hypergeometric functions. The decay
rates and initial regularity are then obtained by using the asymptotic
behaviors of hypergeometric functions. In the case of $B^{2}=\infty $ (i.e.
no shear), the decay rates are obtained by the dispersive estimates and
oscillatory integrals.

This paper is organized as follows. In Section 2, we derive the linearized
equations and give some identities of hypergeometric functions to be used
later. In Section 3, we solve the linearized equations by hypergeometric
functions. In Section 4 and 5, we obtain the decay estimates from the
solution formula for the case $B^{2}<\infty $. In Section 6, the dispersive
decay estimates are obtained for the case $B^{2}=\infty $.

\section{Preliminary}

\subsection{Linearized Euler Equation and Boussinesq Approximation}

The equations for two dimensional inviscid incompressible flows in
stratified fluids are%
\begin{equation}
\rho \left( \partial _{t}+\boldsymbol{v}\cdot \nabla \right) \boldsymbol{v}%
+\nabla p=\rho \boldsymbol{g},  \label{eqn-momentum-Euler}
\end{equation}%
\begin{equation}
\left( \partial _{t}+\boldsymbol{v}\cdot \nabla \right) \rho =0,
\label{eqn-density-Euler}
\end{equation}%
\begin{equation*}
\nabla \cdot \boldsymbol{v}=0,
\end{equation*}%
where $(x,y)\in \mathbb{T}\times \mathbb{R}$,$\ \boldsymbol{v=}\left(
v^{x},v^{y}\right) $ is the velocity, $\rho \ $is the density and $%
\boldsymbol{g}=(0,-g)$ is the gravitational acceleration directing downward
with $g$ being the gravitational constant. The simplest stationary solution
is the shear flow, with $\boldsymbol{v}_{0}=(U(y),0)$ and $\rho _{0}=\rho
_{0}(y)$. Let $\psi =\psi (t;x,y)$ be the stream function such that $%
\boldsymbol{v}=\nabla ^{\perp }\psi $. Here $\nabla ^{\perp }=\left(
-\partial _{y},\partial _{x}\right) $.

We consider the linearized equations near a shear $\left( \boldsymbol{v}%
_{0},\rho _{0}\right) $. Let $\boldsymbol{v}=$ $\nabla ^{\perp }\psi $ and $%
\rho $ be infinitesimal perturbations of velocity and density. The
linearized equations are
\begin{equation}
\rho _{0}\left[ \left( \partial _{t}+U(y)\partial _{x}\right) \boldsymbol{v}%
+\left( v^{y}\partial _{y}\right) \boldsymbol{v}_{0}\right] +\nabla p=\rho
\boldsymbol{g},  \label{eqn-linearized-momentum-Euler}
\end{equation}%
\begin{equation}
\left( \partial _{t}+U(y)\partial _{x}\right) \rho +v^{y}\rho _{0}^{\prime
}\left( y\right) =0.  \label{eqn-linearized-density-Euler}
\end{equation}%
\begin{equation*}
\nabla \cdot \boldsymbol{v}=0.
\end{equation*}%
Taking the curl of (\ref{eqn-linearized-momentum-Euler}), we get

\begin{align}
-\frac{\rho _{0}^{\prime }(y)}{\rho _{0}}\left[ U^{\prime }(y)\partial
_{x}\psi +\left( \partial _{t}+U(y)\partial _{x}\right) (-\partial _{y}\psi )%
\right] &  \label{eqn-stream-linearized-Euler} \\
+\left( \partial _{t}+U(y)\partial _{x}\right) \Delta \psi -U^{\prime \prime
}(y)\partial _{x}\psi & =-\partial _{x}\left( \frac{\rho }{\rho _{0}}\right)
g.  \notag
\end{align}%
The equation (\ref{eqn-linearized-density-Euler}) can be written as
\begin{equation}
\left( \partial _{t}+U(y)\partial _{x}\right) \frac{\rho }{\rho _{0}}%
=-\partial _{x}\psi \frac{\rho _{0}^{\prime }(y)}{\rho _{0}}.
\label{eqn-linearized-relative-density}
\end{equation}%
Consider Couette flow with an exponential density profile, that is, $U(y)=Ry$%
, $\rho _{0}(y)=Ae^{-\beta y}$. Then (\ref{eqn-stream-linearized-Euler})-(%
\ref{eqn-linearized-relative-density}) become
\begin{equation}
\beta \left[ R\partial _{x}-\left( \partial _{t}+Ry\partial _{x}\right)
\partial _{y}\right] \psi +\left( \partial _{t}+Ry\partial _{x}\right)
\Delta \psi =-\partial _{x}\left( \frac{\rho }{\rho _{0}}\right) g,
\label{originalR01}
\end{equation}

\begin{equation}
\left( \partial _{t}+Ry\partial _{x}\right) \left( \frac{\rho }{\rho _{0}}%
\right) =\beta \partial _{x}\psi .  \label{originalR02}
\end{equation}%
If $R\neq 0$, denote $B^{2}=\frac{\beta g}{R^{2}}$ to be the Richardson
number, $T=\frac{R\rho }{\beta \rho _{0}(y)}$ be the relative density
perturbation, $\omega =-\Delta \psi $ be the vorticity perturbation and let $%
t^{\prime }=Rt$. Then we have
\begin{align*}
-\beta \left[ \partial _{x}-\left( \partial _{t^{\prime }}+y\partial
_{x}\right) \partial _{y}\right] \psi +(\partial _{t^{\prime }}+y\partial
_{x})\omega & =B^{2}\partial _{x}T, \\
(\partial _{t^{\prime }}+y\partial _{x})T& =\partial _{x}\psi .
\end{align*}%
For convenience we still use $t$ for $t^{\prime }$. Thus the resulting
linearized system is
\begin{equation}
-\beta \left[ \partial _{x}-\left( \partial _{t}+y\partial _{x}\right)
\partial _{y}\right] \psi +(\partial _{t}+y\partial _{x})\omega
=B^{2}\partial _{x}T,  \label{finalEquationOri1}
\end{equation}%
\begin{equation}
(\partial _{t}+y\partial _{x})T=\partial _{x}\psi ,
\label{finalEquationOri2}
\end{equation}%
\begin{equation}
\omega =-\Delta \psi .  \label{finalEquationOri3}
\end{equation}

The system (\ref{finalEquationOri1})-(\ref{finalEquationOri3}) is rather
complicated. Many authors, including H\o iland (\cite{hoiland53}), Case (%
\cite{Case60}), Kuo (\cite{kuo}), Hartman (\cite{hartman74}), Chimonas (\cite%
{chimonas79}), Brown and Stewartson (\cite{brown-stewartson80}), Farrell and
Ioannou (\cite{farrel93}), chose to consider the Boussinesq approximation,
where the variation of density is ignored except for the gravity force term $%
\rho \boldsymbol{g}$. To apply the Boussinesq approximation, the density
perturbation should be relatively small compared with the constant density.
Under this approximation, the Euler momentum equation becomes
\begin{equation*}
\bar{\rho}\left( \partial _{t}\boldsymbol{v+}\left( \boldsymbol{v}\cdot
\nabla \right) \boldsymbol{v}\right) +\nabla p=\rho \boldsymbol{g},
\end{equation*}%
where $\bar{\rho}$ is a constant and $\rho $ is the variation of density.
The linearized Boussinesq equations near a shear flow $\left( U\left(
y\right) ,0\right) $ with the density variation profile $\rho _{0}\left(
y\right) $ is \
\begin{equation}
\left( \partial _{t}+U(y)\partial _{x}\right) \Delta \psi -U^{\prime \prime
}(y)\partial _{x}\psi =-\partial _{x}\left( \frac{\rho }{\bar{\rho}}\right)
g,  \label{eqn-linearized-stream-Bou}
\end{equation}%
\begin{equation}
\left( \partial _{t}+U(y)\partial _{x}\right) \frac{\rho }{\bar{\rho}}%
=-\partial _{x}\psi \frac{\rho _{0}^{\prime }}{\bar{\rho}}.
\label{eqn-linearized-density}
\end{equation}%
Compared this with the linearized original equation (\ref%
{eqn-stream-linearized-Euler}), it can be regarded as the case when ${\rho
_{0}^{\prime }}/{\rho _{0}}$ is very small, such that the first term of (\ref%
{eqn-stream-linearized-Euler}) is neglected and ${\rho _{0}}$ is taken to be
a constant $\bar{\rho}$. For Couette flow $U(y)=Ry$ with the exponential
profile $\rho _{0}=Ae^{-\beta y}$, to use the Boussinesq approximation, $%
\beta $ should be small which implies that $\rho _{0}\thickapprox A\left(
1-\beta y\right) $. Thus, we consider the linearized Boussinesq equations
near Couette flow $\left( Ry,0\right) $ with the linear density variation
profile $\rho _{0}\left( y\right) =-A\beta y$ and a constant density
background $\bar{\rho}=A$. Then (\ref{eqn-linearized-stream-Bou})-(\ref%
{eqn-linearized-density}) become
\begin{equation}
\left( \partial _{t}+Ry\partial _{x}\right) \Delta \psi =-\partial
_{x}\left( \frac{\rho }{A}\right) g,  \label{boussinesqR01}
\end{equation}%
\begin{equation}
\left( \partial _{t}+Ry\partial _{x}\right) \left( \frac{\rho }{A}\right)
=\beta \partial _{x}\psi .  \label{boussinesqR02}
\end{equation}%
If $R\neq 0$, denoting $B^{2}=\frac{\beta g}{R^{2}},\ T=\frac{R\rho }{\beta A%
}$ and scaling the time $t$ by $Rt$, then we have
\begin{equation}
(\partial _{t}+y\partial _{x})\omega =B^{2}\partial _{x}T,
\label{finalEquation1}
\end{equation}%
\begin{equation}
(\partial _{t}+y\partial _{x})T=\partial _{x}\psi ,  \label{finalEquation2}
\end{equation}%
\begin{equation}
\omega =-\Delta \psi .  \label{finalEquation3}
\end{equation}

\subsection{Sobolev spaces}

Without loss of generality, from now on we assume period length $L$ in $x$ direction is $2\pi$. Define the Fourier transform of $f(x,y)\ \left( (x,y)\in \mathbb{T}\times
\mathbb{R}\right) $, as
\begin{equation*}
\hat{f}(k,\eta )=\frac{1}{2\pi }\int_{\mathbb{T}\times \mathbb{R}%
}e^{-ixk-iy\eta }f(x,y)\mathrm{d}x\mathrm{d}y,\ \ (k,\eta )\in \mathbb{Z}%
\times \mathbb{R}.
\end{equation*}%
Fourier inversion formula is
\begin{equation*}
f(x,y)=\frac{1}{2\pi }\sum_{k\in \mathbb{Z}}\int_{\mathbb{R}}e^{ixk+iy\eta }%
\hat{f}(k,\eta )\mathrm{d}x\mathrm{d}y.
\end{equation*}%
The Sobolev space $H_{x}^{s_{x}}H_{y}^{s_{y}}$ is defined to be all
functions $f$ in $L^{2}\left( \mathbb{T}\times \mathbb{R}\right) $
satisfying
\begin{equation*}
\sum_{k\in \mathbb{Z}}(1+k^{2})^{s_{x}}\int_{\mathbb{R}}\left( 1+\eta
^{2}\right) ^{s_{y}}\left\vert \hat{f}(k,\eta )\right\vert ^{2}\mathrm{d}%
\eta <+\infty ,
\end{equation*}%
with the norm
\begin{equation*}
\Vert f\Vert _{H_{x}^{s_{x}}H_{y}^{s_{y}}}=\left( \sum_{k\in \mathbb{Z}%
}(1+k^{2})^{s_{x}}\int_{\mathbb{R}}\left( 1+\eta ^{2}\right)
^{s_{y}}\left\vert \hat{f}(k,\eta )\right\vert ^{2}\mathrm{d}\eta \right) ^{%
\frac{1}{2}}.
\end{equation*}%
Similarly, we define
\begin{equation*}
\Vert f\Vert _{H_{x}^{s_{x}}W_{y}^{s_{y},p}}=\left( \sum_{k\in \mathbb{Z}%
}(1+k^{2})^{s_{x}}\Vert \hat{f}(k,y)\Vert _{W_{y}^{s_{y},p}}^{2}\right) ^{%
\frac{1}{2}},
\end{equation*}%
where $W_{y}^{s_{y},p}$ is the $L^{p}$ Sobolev space in $\mathbb{R}$\ and
\begin{equation*}
\hat{f}(k,y)=\frac{1}{\sqrt{2\pi} }\int_{\mathbb{T}}e^{-ixk}f(x,y)\mathrm{d}x,\ \
k\in \mathbb{Z}.
\end{equation*}

\subsection{Hypergeometric Functions}

Gaussian hypergeometric function $F(a,b;c;z)$ is defined by the power series
\begin{equation*}
F(a,b;c;z)=\sum_{n=0}^{\infty }\frac{(a)_{n}(b)_{n}}{(c)_{n}}\frac{z^{n}}{n!}
\end{equation*}%
for $|z|<1$, where
\begin{equation*}
(x)_{n}=\left\{
\begin{array}{lc}
1 & n=0, \\
x(x+1)\cdots (x+n-1) & n>0.%
\end{array}%
\right.
\end{equation*}%
Its value $F(z)$ for $|z|\geq 1$ is defined by the analytic continuation. If
$c,z\in \mathbb{R}$, and $a,b$ are complex conjugate, then $F(a,b;c;z)$ is
also real. The following lemma is known as Gauss' contiguous relation.

\begin{lem}
\label{gaussContiguousRelations} The derivative of $F(z)=F(a,b;c;z)$ can be
expressed as
\begin{align*}
\frac{dF}{dz}&=\frac{ab}{c}F(a+1,b+1;c+1;z) \\
&=\frac{c-1}{z}\left(F(a,b;c-1;z)-F(a,b;c;z)\right) \\
&=\frac{1}{c(1-z)}\left[\left(c-a\right)\left(c-b\right)F(a,b;c+1;z)+c%
\left(a+b-c\right)F(a,b;c;z)\right].
\end{align*}
\end{lem}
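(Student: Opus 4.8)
The plan is to establish all three formulas by direct manipulation of the defining power series on the disk $|z|<1$, where term-by-term differentiation is justified by uniform convergence on compact subsets, and then to pass to the full slit plane $\mathbb{C}\setminus[1,\infty)$ by analytic continuation (the identity theorem), since every function occurring on either side is holomorphic there. Throughout, the only algebraic input is the Pochhammer identity $(x)_{n+1}=x\,(x+1)_n$ together with $(x)_{n+1}=(x)_n(x+n)$, and their consequences $(c)_n/(c+1)_n=c/(c+n)$ and $(c-1)_n=\frac{c-1}{c+n-1}(c)_n$.

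For the first identity, differentiating $F(a,b;c;z)=\sum_{n\ge 0}\frac{(a)_n(b)_n}{(c)_n}\frac{z^n}{n!}$, discarding the $n=0$ term, and reindexing $n\mapsto n+1$ gives $F'(z)=\sum_{n\ge 0}\frac{(a)_{n+1}(b)_{n+1}}{(c)_{n+1}}\frac{z^n}{n!}$; applying $(x)_{n+1}=x(x+1)_n$ to each Pochhammer symbol factors out $ab/c$ and leaves exactly the series for $F(a+1,b+1;c+1;z)$. For the second identity, I form $F(a,b;c-1;z)-F(a,b;c;z)$ and read off the coefficient of $z^n$, namely $\frac{(a)_n(b)_n}{n!}\big(\frac{1}{(c-1)_n}-\frac{1}{(c)_n}\big)$; using $(c-1)_n=\frac{c-1}{c+n-1}(c)_n$ this bracket equals $\frac{n}{(c-1)(c)_n}$, so the $n=0$ term vanishes and the difference equals $\frac{z}{c-1}\sum_{n\ge 1}\frac{(a)_n(b)_n}{(c)_n}\frac{z^{n-1}}{(n-1)!}=\frac{z}{c-1}F'(z)$ by the reindexing already performed. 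Multiplying by $(c-1)/z$ yields the claim.

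For the third identity it is convenient to clear the factor $c(1-z)$ and show instead, as an identity of power series,
\[
(c-a)(c-b)\,F(a,b;c+1;z)+c(a+b-c)\,F(a,b;c;z)=c(1-z)\,F'(z),
\]
by comparing coefficients of $z^n$. On the left, using $(c)_n/(c+1)_n=c/(c+n)$, the coefficient of $z^n$ is $\frac{(a)_n(b)_n}{(c)_n\,n!}\cdot\frac{c}{c+n}\big[(c-a)(c-b)+(a+b-c)(c+n)\big]$, and the bracket collapses to $ab+(a+b-c)n$. On the right, writing $c(1-z)F'(z)$ via the series for $F'$ from the first identity and using $(x)_{n+1}=(x)_n(x+n)$ and $1/(n-1)!=n/n!$, the coefficient of $z^n$ is $\frac{(a)_n(b)_n}{(c)_n\,n!}\cdot\frac{c}{c+n}\big[(a+n)(b+n)-n(c+n)\big]$, whose bracket likewise collapses to $ab+(a+b-c)n$. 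The two coefficients agree for every $n\ge 0$, so the identity holds on $|z|<1$.

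It remains to remove the restriction $|z|<1$: the functions $F(a,b;c\pm 1;z)$, $F(a,b;c;z)$, $F(a+1,b+1;c+1;z)$ and $F'(z)$ are all holomorphic on $\mathbb{C}\setminus[1,\infty)$, and the meromorphic factors $1/z$ and $1/(1-z)$ have their only poles at $z=0$ and $z=1$, so each relation, valid on a nonempty open set, extends to the full domain by analytic continuation. I expect the only real work to be the elementary polynomial simplifications in the third part — verifying that both brackets reduce to $ab+(a+b-c)n$ — which is routine bookkeeping with Pochhammer symbols rather than a genuine obstacle.
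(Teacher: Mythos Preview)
Your proof is correct. The paper itself does not prove this lemma at all: it simply states the result and defers to pages 57 and 74 of Bateman's \emph{Higher Transcendental Functions} for the proof (together with Lemma~\ref{eulerEquation}). Your direct power-series argument, with the Pochhammer bookkeeping and analytic continuation to the slit plane, is exactly the standard verification one would find in such a reference, and all three coefficient computations check out --- in particular the reduction of both brackets in the third identity to $ab+(a+b-c)n$ is correct.
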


Hypergeometric functions are related to solutions of Euler's hypergeometric
differential equation.

\begin{lem}
\label{eulerEquation} Assume $c$ is not an integer. Euler's hypergeometric
differential equation
\begin{equation}
z(1-z)f^{\prime \prime }(z)+\left[ c-(a+b+1)z\right] f^{\prime }(z)-abf(z)=0
\label{eqn-Euler-hypergeometric}
\end{equation}%
has two linearly independent solutions
\begin{align*}
f_{1}(z)& =F(a,b;c;z), \\
f_{2}(z)& =z^{1-c}F(1+a-c,1+b-c;2-c;z).
\end{align*}
\end{lem}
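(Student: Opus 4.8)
The plan is to verify directly that each of the two proposed functions solves \eqref{eqn-Euler-hypergeometric}, and then to establish linear independence by comparing their leading behavior at $z=0$.

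For $f_{1}(z)=F(a,b;c;z)=\sum_{n\ge 0}c_{n}z^{n}$ with $c_{n}=\frac{(a)_{n}(b)_{n}}{(c)_{n}\,n!}$, I would substitute the power series into the left-hand side of \eqref{eqn-Euler-hypergeometric}, compute $z(1-z)f_{1}''$, $[c-(a+b+1)z]f_{1}'$ and $-ab f_{1}$ as power series, and collect the coefficient of $z^{n}$. After shifting indices this reduces the ODE to the two-term recurrence
\[
(n+1)(n+c)\,c_{n+1}=(n+a)(n+b)\,c_{n},
\]
which is exactly what the identities $(x)_{n+1}=(x+n)(x)_{n}$ and $(n+1)!=(n+1)\,n!$ give for the coefficients $c_{n}$. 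Since $c$ is not an integer, in particular not a non-positive integer, every $c_{n}$ is finite and well defined; the ratio test (using $c_{n+1}/c_{n}\to 1$) gives convergence and absolute convergence on $|z|<1$, which legitimizes the term-by-term manipulation, and the identity persists on $|z|\ge 1$ by analytic continuation.

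For $f_{2}$, I would set $f(z)=z^{1-c}g(z)$ and substitute into \eqref{eqn-Euler-hypergeometric}. A direct (and purely mechanical) computation of $f',f''$ in terms of $g',g''$ shows that the equation for $g$ is again of hypergeometric type, namely \eqref{eqn-Euler-hypergeometric} with parameters $(a',b',c')=(1+a-c,\,1+b-c,\,2-c)$. Because $c\notin\mathbb{Z}$ we also have $c'=2-c\notin\mathbb{Z}$, so the result just established applies to $g$, giving $g(z)=F(1+a-c,1+b-c;2-c;z)$ and hence $f_{2}$. Finally, near $z=0$ one has $f_{1}(z)=1+O(z)$ while $f_{2}(z)=z^{1-c}\bigl(1+O(z)\bigr)$, and since $c$ is not an integer the exponent $1-c$ is not a non-negative integer; a relation $\alpha f_{1}+\beta f_{2}\equiv 0$ then forces $\alpha=\beta=0$ by matching the exponents of the leading terms. (Alternatively one may invoke Abel's formula: the Wronskian of \eqref{eqn-Euler-hypergeometric} equals $W_{0}\,z^{-c}(1-z)^{c-a-b-1}$, which is not identically zero.)

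None of these steps presents a genuine obstacle — the content is the classical Frobenius analysis at the regular singular point $z=0$. The only point deserving a word of care is bookkeeping: one must track the index shifts in the series substitution accurately, and one should record explicitly that the hypothesis $c\notin\mathbb{Z}$ is used twice, once to keep the denominators $(c)_{n}$ and $(2-c)_{n}$ from vanishing and once to ensure the indicial exponents $0$ and $1-c$ at $z=0$ do not differ by an integer, which is precisely the condition making the two Frobenius solutions independent.
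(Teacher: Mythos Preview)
Your argument is correct and is the standard Frobenius-type verification: check that the power-series coefficients of $F(a,b;c;z)$ satisfy the recurrence forced by the ODE, reduce the second solution to the first via the substitution $f=z^{1-c}g$, and separate the two by their indicial exponents $0$ and $1-c$ at $z=0$. The bookkeeping you flag (index shifts, the two uses of $c\notin\mathbb{Z}$) is exactly where care is needed, and you handle it properly.

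As for comparison: the paper does not supply its own proof of this lemma. It simply refers the reader to Bateman's \emph{Higher Transcendental Functions} (pages 57 and 74) for both Lemma~\ref{gaussContiguousRelations} and Lemma~\ref{eulerEquation}. What you have written is essentially the classical argument one would find there, so there is no meaningful divergence in approach --- you have just filled in what the paper leaves as a citation.
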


The proof of these two lemmas can be found in pages 57 and 74 of the book (%
\cite{bateman53}).

Hypergeometric functions have one branch point at $z=1$, and another at $%
z=\infty $. The default cut-line connecting these two branch points is
chosen as $z>1,z\in \mathbb{R}$. Pfaff transform can relate the value of a
hypergeometric functions near $z=1$ to the value of another one near $%
z=\infty $ in the following way:
\begin{equation}
F(a,b;c;z)=(1-z)^{-b}F\left( c-a,b;c;\frac{z}{z-1}\right) ,  \label{Pfaff1}
\end{equation}%
\begin{equation}
F(a,b;c;z)=(1-z)^{-b}F\left( c-a,b;c;\frac{z}{z-1}\right) .  \label{Pfaff2}
\end{equation}%
By combining these two transforms, we obtain the Euler transform
\begin{equation}
F(a,b;c;z)=(1-z)^{c-a-b}F\left( c-a,c-b;c;z\right) .  \label{euler transform}
\end{equation}

When $\mathrm{Re}(c)>\mathrm{Re}(a+b)$ we have the Gauss formula
\begin{equation}
F(a,b;c;1)=\frac{\Gamma (c)\Gamma (c-a-b)}{\Gamma (c-a)\Gamma (c-b)}.
\label{gauss}
\end{equation}%
When $\mathrm{Re}(c)<\mathrm{Re}(a+b)$, $F(a,b;c;1)$ is infinity.

The following lemma plays an important role in solving the linearized
equations in the next Section.

\begin{lem}
\label{lemma-wronski} The Wronskian of the two solutions listed above is
\begin{equation*}
W(z)=f_{1}(z)f_{2}^{\prime }(z)-f_{1}^{\prime
}(z)f_{2}(z)=(1-c)z^{-c}(1-z)^{c-1-a-b}.
\end{equation*}
\end{lem}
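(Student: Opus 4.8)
The plan is to compute the Wronskian $W(z)$ by the standard two-pronged argument: first determine its functional form up to a multiplicative constant from the ODE, then fix the constant by examining the leading-order behavior near the singular point $z=0$. For the first step, recall from Lemma \ref{eulerEquation} that $f_{1}$ and $f_{2}$ both solve the second-order linear ODE \eqref{eqn-Euler-hypergeometric}, which we rewrite in the form $f'' + p(z) f' + q(z) f = 0$ with $p(z) = \frac{c - (a+b+1)z}{z(1-z)}$. Abel's identity then gives $W'(z) = -p(z) W(z)$, so $W(z) = W_{0}\exp\!\left(-\int p(z)\,dz\right)$ for some constant $W_{0}$. The antiderivative is elementary: using the partial-fraction decomposition $\frac{c - (a+b+1)z}{z(1-z)} = \frac{c}{z} + \frac{a+b+1-c}{1-z}$, one integrates to get $\int p = c\log z - (a+b+1-c)\log(1-z)$, and hence $W(z) = W_{0}\, z^{-c}(1-z)^{c-a-b-1}$, matching the claimed form.

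For the second step, I would pin down $W_{0}$ by expanding near $z=0$. Write $f_{1}(z) = 1 + \frac{ab}{c}z + O(z^{2})$, so $f_{1}'(z) = \frac{ab}{c} + O(z)$. For $f_{2}(z) = z^{1-c}F(1+a-c, 1+b-c; 2-c; z) = z^{1-c}\bigl(1 + O(z)\bigr)$, we get $f_{2}'(z) = (1-c)z^{-c}\bigl(1 + O(z)\bigr)$. Therefore
\begin{equation*}
W(z) = f_{1}f_{2}' - f_{1}'f_{2} = 1\cdot (1-c)z^{-c} - \frac{ab}{c}\cdot z^{1-c} + O(z^{1-c}) = (1-c)z^{-c}\bigl(1 + O(z)\bigr).
\end{equation*}
Comparing with $W(z) = W_{0}\, z^{-c}(1-z)^{c-a-b-1} = W_{0}\,z^{-c}\bigl(1 + O(z)\bigr)$ as $z\to 0$ forces $W_{0} = 1-c$, which yields $W(z) = (1-c)z^{-c}(1-z)^{c-1-a-b}$ as asserted.

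The only delicate point is bookkeeping with the branch cuts and with the hypothesis that $c$ is not an integer, which is exactly what makes $z^{1-c}$ and the two solutions genuinely independent and ensures $f_{2}$ is well defined; away from the cut-line $z > 1$ all the expansions above are on solid footing, and by analytic continuation the identity $W(z) = (1-c)z^{-c}(1-z)^{c-1-a-b}$ holds throughout the cut plane. I do not anticipate a real obstacle here — the computation is short and self-contained — the main thing to be careful about is keeping the exponents of $z$ and $(1-z)$ straight when matching the two expressions for $W$.
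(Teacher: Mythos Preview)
Your proof is correct and follows essentially the same approach as the paper: Liouville/Abel's formula to get $W(z)=W_0 z^{-c}(1-z)^{c-1-a-b}$, then matching leading-order behavior at $z=0$ to identify $W_0=1-c$. The paper's version is slightly terser but the argument is identical.
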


\begin{proof}
By Liouville's formula, the Wronskian of Euler's hypergeometric differential
equation (\ref{eqn-Euler-hypergeometric}) can be written as
\begin{align*}
W(z)& =C\exp \left( -\int \frac{c-(a+b+1)z}{z(1-z)}\mathrm{d}z\right)  \\
& =C\exp \left( -\log (1-z)(a+b+1-c)-c\log (z)\right)  \\
& =Cz^{-c}(1-z)^{c-1-a-b}=Cz^{-c}+O(z^{-c-1})
\end{align*}%
To determine the constant $C$, it is sufficient to calculate the leading
order term of $W(z)$ in the power series expansion near $z=0$. By the
definition,
\begin{equation*}
f_{1}(0)=1,\ \ f_{1}^{\prime }(0)=\frac{ab}{c},\ \ f_{2}(z)\sim z^{1-c},\ \
f_{2}^{\prime}(z)\sim (1-c)z^{-c}
\end{equation*}%
when $z \rightarrow 0$, so $C=1-c$ and $W(z)=(1-c)z^{-c}(1-z)^{c-1-a-b}$.
\end{proof}

\section{Solutions by Hypergeometric functions}

In this section, we apply Fourier transform on the linearized systems (\ref%
{finalEquation1}-\ref{finalEquation3}) based on the Boussinesq approximation
and (\ref{finalEquationOri1}-\ref{finalEquationOri3}) based on full Euler
equations respectively. Then we reduce them to a second order ODE in $t$,
and solve it explicitly by using hypergeometric functions. We will study
these equations in the sheared coordinates $\left( z,y\right) =\left(
x-ty,y\right) $ and define
\begin{align*}
f(t;z,y)& =\omega (t;z+ty,y)=\omega (t;x,y), \\
\phi (t;z,y)& =\psi (t;z+ty,y)=\psi (t;x,y), \\
\tau (t;z,y)& =T(t;z+ty,y)=T(t;x,y).
\end{align*}

\subsection{Boussinesq approximation}

In the new coordinates $\left( z,y\right) $, equations (\ref{finalEquation1}-%
\ref{finalEquation3}) become the following:
\begin{align*}
\partial _{t}f(t;z,y)& =\left( \partial _{t}+y\partial _{x}\right) \omega
(t;x,y)=B^{2}\partial _{x}T(t;x,y)=B^{2}\partial _{z}\tau (t;z,y), \\
\partial _{t}\tau (t;z,y)& =\left( \partial _{t}+y\partial _{x}\right)
T(t;x,y)=\partial _{x}\psi (t;x,y)=\partial _{z}\phi (t;z,y),
\end{align*}%
\begin{equation*}
\left[ \partial _{zz}+(\partial _{y}-t\partial _{z})^{2}\right] \phi
(t;z,y)=\psi _{xx}+\psi _{yy}=-\omega (t;x,y)=-f(t;z,y).
\end{equation*}

By the Fourier transform $(z,y)\rightarrow (k,\eta )$, we get
\begin{equation*}
\hat{f}_{t}=B^{2}(ik)\hat{\tau},\ \ \hat{\tau}_{t}=(ik)\hat{\phi},
\end{equation*}%
\begin{equation}
\left[ (ik)^{2}+(i\eta -ikt)^{2}\right] \hat{\phi}=-\hat{f}.
\label{fourier-psi-f}
\end{equation}%
Differentiate (\ref{fourier-psi-f}) twice with respect to $t$ to get%
\begin{equation}
\left[ (ik)^{2}+(i\eta -ikt)^{2}\right] \hat{\phi}_{t}+2(i\eta -ikt)(-ik)%
\hat{\phi}=-\hat{f}_{t}=-B^{2}(ik)\hat{\tau},  \label{f_t}
\end{equation}%
\begin{align*}
& \left[ (ik)^{2}+(i\eta -ikt)^{2}\right] \hat{\phi}_{tt}+4(i\eta -ikt)(-ik)%
\hat{\phi}_{t}+2(-ik)^{2}\hat{\phi} \\
=& -\hat{f}_{tt}=-B^{2}(ik)\hat{\tau}_{t}=-B^{2}(ik)^{2}\hat{\phi}.
\end{align*}%
For fixed $k\neq 0$ and $\eta $, define $s=t-\frac{\eta }{k}$ and $s_{0}=-%
\frac{\eta }{k}$. Then we obtain a second order linear ODE for $\hat{\phi}$
\begin{equation}
(1+s^{2})\hat{\phi}_{tt}+4s\hat{\phi}_{t}+(2+B^{2})\hat{\phi}=0.
\label{ODE-phi}
\end{equation}%
First, we look for special solutions of the form $\hat{\phi}(t;k,\eta
)=g(-s^{2})$. Let $u=-s^{2}$, then $\hat{\phi}_{t}=-2sg^{\prime }$ and $\hat{%
\phi}_{tt}=4s^{2}g^{\prime \prime }-2g^{\prime }$. Equation (\ref{ODE-phi})
becomes
\begin{equation*}
u(1-u)g^{\prime \prime }+\left( \frac{1}{2}-\frac{5}{2}u\right) g^{\prime }-%
\frac{2+B^{2}}{4}g=0.
\end{equation*}%
This is in the form of Euler's hypergeometric differential equation (\ref%
{eqn-Euler-hypergeometric}) with $c=\frac{1}{2}$ and $a,b=\frac{3}{4}\pm
\frac{\nu }{2}$, where $\nu =\sqrt{\frac{1}{4}-B^{2}}$. By Lemma \ref%
{eulerEquation}, it has two linearly independent solutions%
\begin{equation}
g_{1}(s)=F\left( a,b;c;u\right) =F\left( \frac{3}{4}-\frac{\nu }{2},\frac{3}{%
4}+\frac{\nu }{2};\frac{1}{2};-s^{2}\right) ,  \label{g_1}
\end{equation}%
\begin{equation}
g_{2}(s)=-iu^{1-c}F\left( 1+a-c,1+b-c;2-c;u\right) =sF\left( \frac{5}{4}-%
\frac{\nu }{2},\frac{5}{4}+\frac{\nu }{2};\frac{3}{2};-s^{2}\right) .
\label{g_2}
\end{equation}%
Therefore, the general solutions to the equation (\ref{ODE-phi}) can be
written as
\begin{equation}
\hat{\phi}=C_{1}g_{1}(s)+C_{2}g_{2}(s),  \label{formula-solution-Boussinesq}
\end{equation}%
where $C_{1},C_{2}$ are some constants depending only on $(k,\eta )$. Note
that although a hypergeometric function has a branch point or singularity at
$z=1$, we only need its value at $z=-s^{2}$ which lies on the negative real
axis. Therefore, there is no ambiguity or singularity in (\ref%
{formula-solution-Boussinesq}).

The coefficients $C_{1},C_{2}$ are determined by the initial conditions $%
\psi (0;x,y)$ and $T(0;x,y)$. Let $\hat{\psi}^{0}(k,\eta ),\hat{T}%
^{0}(k,\eta )$ be the Fourier transforms of $\psi (0;x,y)$ and $T(0;x,y)$.
First,
\begin{equation*}
\hat{\phi}(0;k,\eta )=\hat{\phi}^{0}(k,\eta )=\hat{\psi}^{0}(k,\eta ),
\end{equation*}%
and by equation (\ref{f_t}),
\begin{equation*}
\hat{f}_{t}=k^{2}(1+s^{2})\hat{\phi}_{t}+2k^{2}s\hat{\phi}.
\end{equation*}%
Noticing that when $t=0$, $s=-\frac{\eta }{k}=s_{0}$,\ so we have
\begin{align*}
\hat{\phi}_{t}(0;k,\eta )& =\frac{\hat{f}_{t}(0;k,\eta )-2k^{2}s_{0}\hat{\phi%
}(0;k,\eta )}{k^{2}(1+s_{0}^{2})}=\frac{B^{2}(ik)\hat{\tau}(0;k,\eta
)-2k^{2}s_{0}\hat{\phi}(0;k,\eta )}{k^{2}(1+s_{0}^{2})} \\
& =\frac{1}{1+s_{0}^{2}}\left( \frac{iB^{2}}{k}\hat{\tau}^{0}-2s_{0}\hat{\phi%
}^{0}\right) =\frac{1}{1+s_{0}^{2}}\left( \frac{iB^{2}}{k}\hat{T}^{0}-2s_{0}%
\hat{\psi}^{0}\right) .
\end{align*}%
Now we have a linear system for $\left( C_{1},C_{2}\right) $
\begin{align*}
C_{1}g_{1}(s_{0})+C_{2}g_{2}(s_{0})& =\hat{\psi}^{0}, \\
C_{1}g_{1}^{\prime }(s_{0})+C_{2}g_{2}^{\prime }(s_{0})& =\frac{1}{%
1+s_{0}^{2}}\left( \frac{iB^{2}}{k}\hat{T}^{0}-2s_{0}\hat{\psi}^{0}\right) .
\end{align*}%
Therefore, the coefficients are
\begin{align}
& \begin{aligned} C_{1}(k,\eta )=&\frac{1}{\Delta }\left[ g_{2}^{\prime
}(s_{0})+\frac{2s_{0}}{1+s_{0}^{2}}g_{2}(s_{0})\right] \hat{\psi}^{0}(k,\eta
)\\&+\frac{1}{\Delta }\left[ -\frac{iB^{2}}{1+s_{0}^{2}}g_{2}(s_{0})\right]
\frac{\hat{T}^{0}(k,\eta )}{k}, \end{aligned}  \label{formula-C1} \\
& \begin{aligned} C_{2}(k,\eta )=&\frac{1}{\Delta }\left[ -g_{1}^{\prime
}(s_{0})-\frac{2s_{0}}{1+s_{0}^{2}}g_{1}(s_{0})\right] \hat{\psi}^{0}(k,\eta
)\\&+\frac{1}{\Delta }\left[ \frac{iB^{2}}{1+s_{0}^{2}}g_{1}(s_{0})\right]
\frac{\hat{T}^{0}(k,\eta )}{k}, \label{formula-C2} \end{aligned}
\end{align}%
where by Lemma \ref{lemma-wronski}%
\begin{align*}
\Delta & =g_{1}(s_{0})g_{2}^{\prime }(s_{0})-g_{1}^{\prime
}(s_{0})g_{2}(s_{0}) \\
&=-i(-2s_{0})\left( 1-\frac{1}{2}\right) \left( -s_{0}^{2}\right) ^{-\frac{1%
}{2}}\left( 1+s_{0}^{2}\right) ^{-2}=\frac{1}{\left( 1+s_{0}^{2}\right) ^{2}}%
,
\end{align*}%
which is strictly positive for all $s_{0}\in \mathbb{R}$.

Thus the solution of (\ref{ODE-phi}) is given explicitly by
\begin{equation*}
\hat{\phi}(t;k,\eta )=C_{1}(k,\eta )g_{1}(s)+C_{2}(k,\eta )g_{2}(s).
\end{equation*}%
As for $\hat{\tau}$, from equation (\ref{f_t}), for $B^{2}>0$ we have%
\begin{align}
\hat{\tau}(t;k,\eta )=& -\frac{ik}{B^{2}}\left( (1+s^{2})\hat{\phi}_{t}+2s%
\hat{\phi}\right) ,  \label{decayOfDensity} \\
=& -\frac{ik}{B^{2}}\left[ (1+s^{2})\left( C_{1}(k,\eta )g_{1}^{\prime
}(s)+C_{2}(k,\eta )g_{2}^{\prime }(s)\right) \right.  \notag \\
& \left. +2s\left( C_{1}(k,\eta )g_{1}(s)+C_{2}(k,\eta )g_{2}(s)\right)
\right] .  \notag
\end{align}

\subsection{Full Euler Equations}

Now we solve the linearized systems (\ref{finalEquationOri1})-(\ref%
{finalEquationOri3}) based on the full Euler equations. With $f,\phi ,\tau $
defined at the beginning of this section, equations (\ref{finalEquationOri1}%
)-(\ref{finalEquationOri3}) turn into
\begin{equation}
-\beta \left[ \partial _{z}-\partial _{t}\left( \partial _{y}-t\partial
_{z}\right) \right] \phi +\partial _{t}f=B^{2}\partial _{z}\tau ,
\label{eqn-f-original}
\end{equation}%
\begin{equation*}
\partial _{t}\tau =\partial _{z}\phi ,\ \ \ -\left[ \partial _{zz}+(\partial
_{y}-t\partial _{z})^{2}\right] \phi =f.
\end{equation*}%
By the Fourier transform $(z,y)\rightarrow (k,\eta )$, (\ref{eqn-f-original}%
) becomes
\begin{equation}
-\beta \left[ ik-\partial _{t}\left( i\eta -ikt\right) \right] \hat{\phi}+%
\hat{f}_{t}=B^{2}(ik)\hat{\tau}.  \label{eqn-Fourier-f-1st-ori}
\end{equation}%
Differentiate above with respect to $t$, we get
\begin{equation*}
-\beta \left[ ik\partial _{t}-\partial _{tt}\left( i\eta -ikt\right) \right]
\hat{\phi}+\hat{f}_{tt}=B^{2}(ik)\hat{\tau}_{t}.
\end{equation*}%
Substituting
\begin{equation}
\hat{\tau}_{t}=(ik)\hat{\phi},\ \ \ \hat{f}=-\left[ (ik)^{2}+(i\eta -ikt)^{2}%
\right] \hat{\phi},  \label{eqn-Fourier-tao-ori}
\end{equation}%
we have
\begin{equation*}
\partial _{tt}\left[ k^{2}+(\eta -kt)^{2}+\beta (i\eta -ikt)\right] \hat{\phi%
}-\beta (ik)\hat{\phi}_{t}+B^{2}k^{2}\hat{\phi}=0.
\end{equation*}%
Define $\chi =e^{-\frac{1}{2}\beta y}\phi $, then $\hat{\phi}(k,\eta )=\hat{%
\chi}(k,\eta +\frac{1}{2}i\beta )$ and the above equation implies%
\begin{align*}
\partial _{tt}\left[ k^{2}+\left( \eta -\frac{1}{2}i\beta -kt\right)
^{2}+\beta \left( i\left( \eta -\frac{1}{2}i\beta \right) -ikt\right) \right]
\hat{\chi}& \\
-\beta (ik)\hat{\chi}_{t}+B^{2}k^{2}\hat{\chi}& =0,
\end{align*}%
After simplification, we have
\begin{equation*}
\partial _{tt}\left[ \frac{1}{4}\beta ^{2}+k^{2}+\left( \eta -kt\right) ^{2}%
\right] \hat{\chi}-i\beta k\hat{\chi}_{t}+B^{2}k^{2}\hat{\chi}=0.
\end{equation*}%
For $k\neq 0$, again define $s=t-\frac{\eta }{k},s_{0}=-\frac{\eta }{k}$,
then
\begin{equation*}
\partial _{tt}\left[ \left( \frac{1}{4}\beta ^{2}+k^{2}+k^{2}s^{2}\right)
\hat{\chi}\right] -i\beta k\hat{\chi}_{t}+B^{2}k^{2}\hat{\chi}=0.
\end{equation*}%
Define $m=\sqrt{\frac{1}{4}\beta ^{2}+k^{2}},\kappa =\frac{k}{m},\beta _{1}=%
\frac{\beta }{2m}$, then we have
\begin{equation*}
\partial _{tt}\left[ \left( m^{2}+k^{2}s^{2}\right) \hat{\chi}\right]
-i\beta k\hat{\chi}_{t}+B^{2}k^{2}\hat{\chi}=0,
\end{equation*}%
\begin{equation*}
\partial _{tt}\left[ \left( 1+\kappa ^{2}s^{2}\right) \hat{\chi}\right]
-2i\beta _{1}\kappa \hat{\chi}_{t}+B^{2}\kappa ^{2}\hat{\chi}=0.
\end{equation*}%
Set $u=-i\kappa s$, then
\begin{align*}
-\partial _{uu}\left( 1-u^{2}\right) \hat{\chi}-2\beta _{1}\hat{\chi}%
_{u}+B^{2}\hat{\chi}& =0, \\
\left( 1-u^{2}\right) \hat{\chi}_{uu}+(2\beta _{1}-4u)\hat{\chi}%
_{u}-(2+B^{2})\hat{\chi}& =0.
\end{align*}%
Define $v=\frac{1-u}{2}$, then
\begin{equation}
v\left( 1-v\right) \hat{\chi}_{vv}+(-\beta _{1}+2-4v)\hat{\chi}_{v}-(2+B^{2})%
\hat{\chi}=0,  \label{oDE2}
\end{equation}%
which is of the form of Euler's hypergeometric differential equation (\ref%
{eqn-Euler-hypergeometric}) with $c=2-\beta _{1}$ and $a,b=\frac{3}{2}\pm
\nu $, where $\nu =\sqrt{\frac{1}{4}-B^{2}}$. By Lemma \ref{eulerEquation},
it has two linear independent solutions,
\begin{align*}
g_{3}(s)& =F\left( \frac{3}{2}-\nu ,\frac{3}{2}+\nu ;2-\beta _{1};v\right)
=F\left( \frac{3}{2}-\nu ,\frac{3}{2}+\nu ;2-\beta _{1};\frac{1+i\kappa s}{2}%
\right) , \\
g_{4}(s)& =\left( \frac{1+i\kappa s}{2}\right) ^{-1+\beta _{1}}F\left( \frac{%
1}{2}+\beta _{1}-\nu ,\frac{1}{2}+\beta _{1}+\nu ;\beta _{1};\frac{1+i\kappa
s}{2}\right)
\end{align*}%
Therefore, the general solution to equation (\ref{oDE2}) is
\begin{equation*}
\hat{\chi}=C_{3}g_{3}(s)+C_{4}g_{4}(s),
\end{equation*}%
where $C_{3},C_{4}$ are constants depending only on $\left( k,\eta \right) $%
. Note that we only need values of $g_{1}$, $g_{2}\,$at $\frac{1}{2}+\frac{%
\kappa s}{2}i$ $\left( s\in \mathbb{R}\right) $, that is, on the line $\mathrm{%
Re}(z)=\frac{1}{2}$. Therefore, the branch point at $z=1\ $will not cause
any ambiguity or singularity.

The initial conditions $\psi (0;x,y)$ and $T(0;x,y)$ are used to determine
the coefficients $C_{3},C_{4}$. Denote $\mu =e^{-\frac{1}{2}\beta y}\tau $, $%
\Psi ^{0}=e^{-\frac{1}{2}\beta y}\psi ^{0},\Upsilon ^{0}=e^{-\frac{1}{2}%
\beta y}T^{0}$, then
\begin{equation*}
\hat{\chi}(0;k,\eta )=\hat{\phi}^{0}\left( k,\eta -\frac{1}{2}i\beta \right)
=\widehat{e^{-\frac{1}{2}\beta y}\psi ^{0}}=\hat{\Psi ^{0}}.
\end{equation*}%
By equations (\ref{eqn-Fourier-f-1st-ori}) and (\ref{eqn-Fourier-tao-ori}),
we have
\begin{equation*}
\hat{\phi}_{t}=\frac{1}{1+s^{2}-\frac{i\beta }{k}s}\left[ \left( \frac{%
2i\beta }{k}-2s\right) \hat{\phi}+\frac{iB^{2}}{k}\hat{\tau}\right] .
\end{equation*}%
Hence
\begin{align*}
\hat{\chi}_{t}(t;k,\eta ) &= \hat{\phi}_{t}\left( t;k,\eta -\frac{1}{2}%
i\beta \right) \\
&= \frac{1}{1+\left( s+\frac{i\beta }{2k}\right) ^{2}-\frac{i\beta }{k}%
\left( s+\frac{i\beta }{2k}\right) }\left[ \left( \frac{2i\beta }{k}-2s-2%
\frac{i\beta }{2k}\right) \hat{\chi}+\frac{iB^{2}}{k}\hat{\mu}\right] \\
&= \frac{1}{1+|\tilde{s}|^{2}}\left( \frac{iB^{2}}{k}\hat{\chi}-2\tilde{s}%
\hat{\mu}\right) ,
\end{align*}%
and%
\begin{equation*}
\hat{\chi}_{t}(0;k,\eta )=\frac{1}{1+|\tilde{s}_{0}|^{2}}\left( \frac{iB^{2}%
}{k}\hat{\Upsilon ^{0}}-2\tilde{s}_{0}\hat{\Psi ^{0}}\right) ,
\end{equation*}%
where $\tilde{s}=s-\frac{i\beta }{2k},\tilde{s}_{0}=s_{0}-\frac{i\beta }{2k}$%
.

So we have a linear system for $\left( C_{3},C_{4}\right) :$
\begin{align*}
C_{3}g_{3}(s_{0})+C_{4}g_{4}(s_{0})& =\hat{\Psi}^{0}, \\
C_{3}g_{3}^{\prime }(s_{0})+C_{4}g_{4}^{\prime }(s_{0})& =\frac{1}{1+|\tilde{%
s}_{0}|^{2}}\left( \frac{iB^{2}}{k}\hat{\Upsilon ^{0}}-2\tilde{s}_{0}\hat{%
\Psi ^{0}}\right) ,
\end{align*}

which gives
\begin{align*}
C_{3}(k,\eta )=& \frac{1}{\Delta }\left[ g_{4}^{\prime }(s_{0})+\frac{2%
\tilde{s}_{0}}{1+|\tilde{s}_{0}|^{2}}g_{4}(s_{0})\right] \hat{\Psi}%
^{0}(k,\eta ) \\
& \ \ +\frac{1}{\Delta }\left[ -\frac{iB^{2}}{1+|\tilde{s}_{0}|^{2}}%
g_{4}(s_{0})\right] \frac{\hat{\Upsilon}^{0}(k,\eta )}{k},
\end{align*}%
\begin{align*}
C_{4}(k,\eta )=& \frac{1}{\Delta }\left[ -g_{3}^{\prime }(s_{0})-\frac{2%
\tilde{s}_{0}}{1+|\tilde{s}_{0}|^{2}}g_{3}(s_{0})\right] \hat{\Psi}%
^{0}(k,\eta ) \\
& \ \ +\frac{1}{\Delta }\left[ \frac{iB^{2}}{1+|\tilde{s}_{0}|^{2}}%
g_{3}(s_{0})\right] \frac{\hat{\Upsilon}^{0}(k,\eta )}{k},
\end{align*}%
where by Lemma \ref{lemma-wronski}

\begin{align*}
\Delta & =g_{3}(s_{0})g_{4}^{\prime }(s_{0})-g_{3}^{\prime
}(s_{0})g_{4}(s_{0}) \\
& =\frac{\kappa i}{2}\left( -1+\beta _{1}\right) \left( \frac{1}{2}+\frac{%
\kappa s_{0}}{2}i\right) ^{-2+\beta _{1}}\left( \frac{1}{2}-\frac{\kappa
s_{0}}{2}i\right) ^{-2-\beta _{1}},
\end{align*}%
which is never zero, because $|\kappa |,\ \beta _{1}\in (0,1)$ by
definition. Moreover,
\begin{equation*}
|\kappa |\geq \frac{1}{\sqrt{\frac{1}{4}\beta ^{2}+1}},\ \ \ 1-\beta
_{1}\geq 1-\frac{\beta /2}{\sqrt{\frac{1}{4}\beta ^{2}+1}}
\end{equation*}%
are both uniformly bounded away from zero for all integers $k\neq 0$. Hence
\begin{equation*}
|\Delta |^{-1}=\left\vert \frac{1}{2}+\frac{\kappa s_{0}}{2}i\right\vert
^{4}\left\vert \frac{\kappa }{2}\right\vert ^{-1}\left( 1-\beta _{1}\right)
^{-1}\lesssim \left\langle s_{0}\right\rangle ^{4}.
\end{equation*}

By equations (\ref{eqn-Fourier-f-1st-ori}) and (\ref{eqn-Fourier-tao-ori}),
for $B^{2}>0$ we have
\begin{equation*}
\hat{\tau}(t;k,\eta )=-\frac{ik}{B^{2}}\left[ -\frac{2i\beta }{k}\hat{\phi}-%
\frac{i\beta }{k}s\hat{\phi}_{t}+(1+s^{2})\hat{\phi}_{t}+2s\hat{\phi}\right]
,
\end{equation*}%
and%
\begin{align*}
\hat{\mu}(t;k,\eta ) =& \hat{\tau}\left( t;k,\eta -\frac{1}{2}i\beta \right)
\\
=& -\frac{ik}{B^{2}}\left[ -\frac{2i\beta }{k}\hat{\chi}-\frac{i\beta }{k}%
\left( s+\frac{i\beta }{2k}\right) \hat{\chi}_{t}+\left( 1+\left( s+\frac{%
i\beta }{2k}\right) ^{2}\right) \hat{\chi}_{t} \right. \\
&\left.+2\left( s+\frac{i\beta }{2k}\right) \hat{\chi}\right] \\
=& -\frac{ik}{B^{2}}\left[ \left( 1+s^{2}+\frac{\beta ^{2}}{4k^{2}}\right)
\hat{\chi}_{t}+2\left( s-\frac{i\beta }{2k}\right) \hat{\chi}\right] \\
=& -\frac{ik}{B^{2}}\left[ \left( 1+|\tilde{s}|^{2}\right) \hat{\chi}_{t}+2%
\tilde{s}\hat{\chi}\right] .
\end{align*}

\section{Decay estimates in the case of Boussinesq approximation}

In this section, we use the solution formula obtained in the last section to
obtain the inviscid decay estimates in Theorem \ref{thm-boussinesq}, for
solutions of the linearized equations under Boussinesq approximation.

\subsection{The case $B^2 > 0$ and $B^2 \neq \frac{1}{4}$}

By expanding $g_{1}(s)$, $g_{2}(s),$ $g_{1}^{\prime }(s_{0})$, $%
g_{2}^{\prime }(s_{0})$ at infinity, we obtain the following asymptotics%
\begin{align}
& \begin{aligned} \label{asymptotics-g1} g_{1}(s) =& \sqrt{\pi }\left[
\frac{\Gamma (\nu )}{\Gamma (-\frac{1}{4}+\frac{\nu }{2})\Gamma
(\frac{3}{4}+\frac{\nu }{2})}s^{-\frac{3}{2}+\nu } \right.\\&\left.
+\frac{\Gamma (-\nu )}{\Gamma (-\frac{1}{4}-\frac{\nu }{2})\Gamma
(\frac{3}{4}-\frac{\nu }{2})}s^{-\frac{3}{2}-\nu }\right] +O\left(
|s|^{-\frac{7}{2}+Re(\nu )}\right) , \end{aligned} \\
& \begin{aligned} \label{asymptotics-g2} g_{2}(s) =& \frac{\sqrt{\pi
}}{2}\left[ \frac{\Gamma (\nu )}{\Gamma (\frac{1}{4}+\frac{\nu }{2})\Gamma
(\frac{5}{4}+\frac{\nu }{2})}s^{-\frac{3}{2}+\nu }
\right.\\&\left.+\frac{\Gamma (-\nu )}{\Gamma (\frac{1}{4}-\frac{\nu
}{2})\Gamma (\frac{5}{4}-\frac{\nu }{2})}s^{-\frac{3}{2}-\nu }\right]
+O\left( |s|^{-\frac{5}{2}+Re(\nu )}\right) , \end{aligned}
\end{align}%
\begin{align}
& \begin{aligned} \label{asymptotics-g1'} g_{1}^{\prime }(s_{0}) =&
2\sqrt{\pi }\left[ \frac{\left( -\frac{3}{4}+\frac{\nu }{2}\right) \Gamma
(\nu )}{\Gamma (-\frac{1}{4}+\frac{\nu }{2})\Gamma (\frac{3}{4} +\frac{\nu
}{2})}s_{0}^{-\frac{5}{2}+\nu } \right.\\&\left. +\frac{\left(
-\frac{3}{4}-\frac{\nu }{2}\right) \Gamma (-\nu )}{\Gamma
(-\frac{1}{4}-\frac{\nu }{2})\Gamma (\frac{3}{4}-\frac{\nu
}{2})}s_{0}^{-\frac{5}{2}-\nu }\right] +O\left( |s_{0}|^{-\frac{7}{2}+Re(\nu
)}\right) , \end{aligned} \\
& \begin{aligned} \label{asymptotics-g2'}g_{2}^{\prime }(s_{0}) =& \sqrt{\pi
}\left[ \frac{\left( -\frac{3}{4}+\frac{\nu }{2}\right) \Gamma (\nu
)}{\Gamma (\frac{1}{4}+\frac{\nu }{2})\Gamma (\frac{5}{4}+\frac{\nu
}{2})}s_{0}^{-\frac{5}{2}+\nu } \right.\\&\left. +\frac{\left(
-\frac{3}{4}-\frac{\nu }{2}\right) \Gamma (-\nu )}{\Gamma
(\frac{1}{4}-\frac{\nu }{2})\Gamma (\frac{5}{4}-\frac{\nu
}{2})}s_{0}^{-\frac{5}{2}-\nu }\right] +O\left( |s_{0}|^{-\frac{7}{2}+Re(\nu
)}\right) . \end{aligned}
\end{align}

For $B^{2}<\frac{1}{4}$ or $>\frac{1}{4}$, $\nu $ is real or pure imaginary.
We treat these cases separately.

\subsubsection{The case $0<B^{2}<\frac{1}{4}$}

In this case $\nu $ is a real number between $0$ and $\frac{1}{2}$. By using
the above asymptotics of $g_{1}\left( s\right) ,g_{2}\left( s\right) $, we
obtain bounds for the coefficients of $C_{1},C_{2}$ (defined in (\ref%
{formula-C1}), (\ref{formula-C2})). Since
\begin{align*}
\frac{1}{\Delta }\left[ g_{2}^{\prime }(s_{0})+\frac{2s_{0}}{1+s_{0}^{2}}%
g_{2}(s_{0})\right] \lesssim & \left\langle s_{0}\right\rangle
^{4}\left\langle s_{0}\right\rangle ^{-\frac{5}{2}+\nu }=\left\langle
s_{0}\right\rangle ^{\frac{3}{2}+\nu }, \\
\frac{1}{\Delta }\left[ -\frac{iB^{2}}{1+s_{0}^{2}}g_{2}(s_{0})\right]
\lesssim & \left\langle s_{0}\right\rangle ^{4}\left\langle
s_{0}\right\rangle ^{-\frac{7}{2}+\nu }=\left\langle s_{0}\right\rangle ^{%
\frac{1}{2}+\nu }, \\
\frac{1}{\Delta }\left[ -g_{1}^{\prime }(s_{0})-\frac{2s_{0}}{1+s_{0}^{2}}%
g_{1}(s_{0})\right] \lesssim & \left\langle s_{0}\right\rangle
^{4}\left\langle s_{0}\right\rangle ^{-\frac{5}{2}+\nu }=\left\langle
s_{0}\right\rangle ^{\frac{3}{2}+\nu }, \\
\frac{1}{\Delta }\left[ \frac{iB^{2}}{1+s_{0}^{2}}g_{1}(s_{0})\right]
\lesssim & \left\langle s_{0}\right\rangle ^{4}\left\langle
s_{0}\right\rangle ^{-\frac{7}{2}+\nu }=\left\langle s_{0}\right\rangle ^{%
\frac{1}{2}+\nu },
\end{align*}%
and
\begin{equation*}
|g_{1}(s)|,|g_{2}(s)|\lesssim \left\langle s\right\rangle ^{-\frac{3}{2}+\nu
},
\end{equation*}%
so we have
\begin{align*}
\left\vert C_{1}(k,\eta )\right\vert \lesssim & \left\langle
s_{0}\right\rangle ^{\frac{3}{2}+\nu }\left( \left\vert \hat{\psi}%
^{0}(k,\eta )\right\vert +\frac{\left\vert \hat{T}^{0}(k,\eta )\right\vert }{%
\left\langle s_{0}\right\rangle |k|}\right) , \\
\left\vert C_{2}(k,\eta )\right\vert \lesssim & \left\langle
s_{0}\right\rangle ^{\frac{3}{2}+\nu }\left( \left\vert \hat{\psi}%
^{0}(k,\eta )\right\vert +\frac{\left\vert \hat{T}^{0}(k,\eta )\right\vert }{%
\left\langle s_{0}\right\rangle |k|}\right) .
\end{align*}%
Therefore
\begin{align}
\left\vert \hat{\phi}(t;k,\eta )\right\vert =& \left\vert C_{1}(k,\eta
)g_{1}(s)+C_{2}(k,\eta )g_{2}(s)\right\vert  \label{fourier-psi} \\
\lesssim & \left\langle s\right\rangle ^{-\frac{3}{2}+\nu }\left\langle
s_{0}\right\rangle ^{\frac{3}{2}+\nu }\left( \left\vert \hat{\psi}%
^{0}(k,\eta )\right\vert +\frac{\left\vert \hat{T}^{0}(k,\eta )\right\vert }{%
\left\langle s_{0}\right\rangle |k|}\right) .  \notag
\end{align}%
To get the decay estimates in the physical space $\left( x,y\right) \ $from
above, we note that the term $\left\langle s\right\rangle ^{-\frac{3}{2}+\nu
}$ does not decay when $t\approx \frac{\eta }{k}\ \left( \text{i.e. }%
s\approx 0\right) $ and as compensation the additional regularity of initial
data is needed to ensure the decay. This is made precise in the following
lemma.

\begin{lem}
\label{lemma-decay}Assume that there exists $a>0$ and $b,c\in \mathbb{R}$
such that
\begin{equation}
\left\vert \hat{g}(t;k,\eta )\right\vert \lesssim \left\langle
s\right\rangle ^{-a}\left\langle s_{0}\right\rangle ^{b}\left\vert
k\right\vert ^{c}\left\vert \hat{h}(k,\eta )\right\vert ,\text{ }0\neq k\in
\mathbb{Z}, \eta \in \mathbb{R},  \label{fourier-estimate}
\end{equation}%
then
\begin{equation*}
\left\Vert P_{\neq 0}g\left( t\right) \right\Vert _{L^{2}\left( \mathbb{%
T\times R}\right) }\lesssim \left\langle t\right\rangle ^{-a}\left\Vert
h\right\Vert _{H_{x}^{c}H_{y}^{b+a}}.
\end{equation*}
\end{lem}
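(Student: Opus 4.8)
The plan is to pass to the Fourier side, use the Plancherel identity, and then trade the non-decaying factor $\left\langle s\right\rangle^{-a}$ (which fails to decay precisely on the resonant set $t\approx\eta/k$) for a genuine decay $\left\langle t\right\rangle^{-a}$ at the cost of extra powers of $\left\langle s_{0}\right\rangle=\left\langle\eta/k\right\rangle$, which are then absorbed into additional $y$-regularity of $h$. First I would write, by Plancherel in both variables (with the normalization of the Fourier transform fixed above),
\[
\left\Vert P_{\neq 0}g(t)\right\Vert_{L^{2}(\mathbb{T}\times\mathbb{R})}^{2}=\sum_{0\neq k\in\mathbb{Z}}\int_{\mathbb{R}}\left\vert\hat g(t;k,\eta)\right\vert^{2}\,\mathrm{d}\eta ,
\]
so that the projection $P_{\neq 0}$ automatically restricts the sum to modes $k\neq 0$, for which $s$ and $s_{0}$ are defined. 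Inserting the pointwise bound (\ref{fourier-estimate}) gives
\[
\left\Vert P_{\neq 0}g(t)\right\Vert_{L^{2}}^{2}\lesssim\sum_{0\neq k\in\mathbb{Z}}\left\vert k\right\vert^{2c}\int_{\mathbb{R}}\left\langle s\right\rangle^{-2a}\left\langle s_{0}\right\rangle^{2b}\left\vert\hat h(k,\eta)\right\vert^{2}\,\mathrm{d}\eta .
\]

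The key input is the elementary Peetre-type inequality $\left\langle t\right\rangle=\left\langle s-s_{0}\right\rangle\le\sqrt{2}\,\left\langle s\right\rangle\left\langle s_{0}\right\rangle$, which follows from $1+(s-s_{0})^{2}\le 2(1+s^{2})(1+s_{0}^{2})$; since $a>0$ this rearranges to $\left\langle s\right\rangle^{-a}\le 2^{a/2}\left\langle t\right\rangle^{-a}\left\langle s_{0}\right\rangle^{a}$. Substituting, the integrand is bounded, up to a constant, by $\left\langle t\right\rangle^{-2a}\left\langle s_{0}\right\rangle^{2(a+b)}\left\vert\hat h(k,\eta)\right\vert^{2}$. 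It then remains to turn $\left\langle s_{0}\right\rangle=\left\langle\eta/k\right\rangle$ and $\left\vert k\right\vert$ into the standard Sobolev weights: since $k$ is a nonzero integer we have $\left\vert k\right\vert\ge 1$, hence $\left\langle\eta/k\right\rangle\le\left\langle\eta\right\rangle$ and, for every real $c$, $\left\vert k\right\vert^{2c}\lesssim(1+k^{2})^{c}$ (treating $c\ge 0$ via $\left\vert k\right\vert^{2}\le 1+k^{2}$ and $c<0$ via $1+k^{2}\le 2\left\vert k\right\vert^{2}$). Putting these together and summing in $k$ yields
\[
\left\Vert P_{\neq 0}g(t)\right\Vert_{L^{2}}^{2}\lesssim\left\langle t\right\rangle^{-2a}\sum_{0\neq k\in\mathbb{Z}}(1+k^{2})^{c}\int_{\mathbb{R}}(1+\eta^{2})^{a+b}\left\vert\hat h(k,\eta)\right\vert^{2}\,\mathrm{d}\eta\le\left\langle t\right\rangle^{-2a}\left\Vert h\right\Vert_{H_{x}^{c}H_{y}^{b+a}}^{2},
\]
and taking square roots gives the claim.

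There is no serious obstacle; the only points requiring care are the harmless bookkeeping in the last step. In particular, passing from $\left\langle\eta/k\right\rangle$ to $\left\langle\eta\right\rangle$ costs no powers of $k$ only because $\left\vert k\right\vert\ge 1$, and the comparison $\left\langle\eta/k\right\rangle^{2(a+b)}\le\left\langle\eta\right\rangle^{2(a+b)}$ tacitly uses $a+b\ge 0$, which holds in every application of the lemma in Sections~4--5 (there $a=\tfrac32-\nu$, $b=\tfrac32+\nu$, so $a+b=3$); if one wished the fully general statement for $a+b<0$ one would instead use $\left\langle\eta/k\right\rangle\ge\left\langle\eta\right\rangle/\left\vert k\right\vert$ and correspondingly shift the $x$-index of the norm. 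One should also make sure the constant from Plancherel is the one consistent with the Fourier convention normalized above, which is immediate.
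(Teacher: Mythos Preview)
Your proof is correct, but you take a slightly different route from the paper. The paper splits the $\eta$-integral into the two regions $\lvert s\rvert\ge\tfrac12\lvert t\rvert$ and $\lvert s\rvert\le\tfrac12\lvert t\rvert$: on the first region $\langle s\rangle^{-2a}\lesssim\langle t\rangle^{-2a}$ directly, while on the second one has $\lvert s_{0}\rvert\ge\tfrac12\lvert t\rvert$ so that $\langle s\rangle^{-2a}\le 1\lesssim\langle t\rangle^{-2a}\langle s_{0}\rangle^{2a}$; recombining gives the same integrand bound you obtain. Your use of the Peetre-type inequality $\langle t\rangle=\langle s-s_{0}\rangle\le\sqrt{2}\,\langle s\rangle\langle s_{0}\rangle$ achieves this in one stroke without the case analysis, which is a bit cleaner and makes the mechanism (trading decay in $s$ for decay in $t$ at the cost of weight in $s_{0}$) more transparent. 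Both arguments rely on the same final bookkeeping step $\langle\eta/k\rangle^{2(a+b)}\le\langle\eta\rangle^{2(a+b)}$ for $\lvert k\rvert\ge 1$, which indeed tacitly requires $a+b\ge 0$; you are right to flag this, and right that it holds in every application in the paper.
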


\begin{proof}
We have
\begin{align*}
\int_\mathbb{R} \left\vert \hat{g}(t;k,\eta )\right\vert ^{2}\mathrm{d} \eta
=&\int_{\left\vert s\right\vert =\left\vert t-\frac{\eta }{k}\right\vert
\geq \frac{1}{2}|t|}\left\vert \hat{g}(t;k,\eta )\right\vert ^{2}\mathrm{d}
\eta +\int_{\left\vert t-\frac{\eta }{k}\right\vert \leq \frac{1}{2}%
|t|}\left\vert \hat{g}(t;k,\eta )\right\vert ^{2}\mathrm{d} \eta \\
=&I_{1}+I_{2}.
\end{align*}

By (\ref{fourier-estimate}), we have
\begin{equation*}
I_{1}\lesssim \left<t\right>^{-2a}\int_{\left\vert t-\frac{\eta }{k}%
\right\vert \geq \frac{1}{2}|t|} \left\langle s_{0}\right\rangle
^{2b}\left\vert k\right\vert ^{2c}\left\vert \hat{h}(k,\eta )\right\vert ^{2}%
\mathrm{d} \eta .
\end{equation*}%
Since $\left\vert t-\frac{\eta }{k}\right\vert \leq \frac{1}{2}|t|$ implies $%
\left\vert s_{0}\right\vert =\left\vert \frac{\eta }{k}\right\vert \geq
\frac{1}{2}|t|$, so
\begin{equation*}
I_{2}\lesssim \left<t\right>^{-2a}\int_{\left\vert t-\frac{\eta }{k}%
\right\vert \leq \frac{1}{2}|t|} \left\langle s_{0}\right\rangle
^{2b+2a}\left\vert k\right\vert ^{2c}\left\vert \hat{h}(k,\eta )\right\vert
^{2}\mathrm{d} \eta .
\end{equation*}%
Thus
\begin{equation*}
\int_\mathbb{R} \left\vert \hat{g}(t;k,\eta )\right\vert ^{2}\mathrm{d} \eta
\lesssim \left<t\right>^{-2a}\int_\mathbb{R} \left\langle s_{0}\right\rangle
^{2b+2a}\left\vert k\right\vert ^{2c}\left\vert \hat{h}(k,\eta )\right\vert
^{2}\mathrm{d} \eta ,
\end{equation*}%
and
\begin{align*}
\left\Vert P_{\neq 0}g\left( t\right) \right\Vert ^{2} _{L^{2}\left( \mathbb{%
T\times R}\right)}=&\sum_{k\neq 0}\int_\mathbb{R} \left\vert \hat{g}%
(t;k,\eta )\right\vert ^{2}\mathrm{d} \eta \\
\lesssim & \left<t\right>^{-2a}\sum_{k\neq 0}\left\vert k\right\vert
^{2c}\int_\mathbb{R} \left\langle \eta \right\rangle ^{2b+2a}\left\vert \hat{%
h}(k,\eta )\right\vert ^{2}\mathrm{d} \eta \\
\lesssim & \left<t\right>^{-2a}\left\Vert h\right\Vert
_{H_{x}^{c}H_{y}^{b+a}}^{2}.
\end{align*}
\end{proof}

\bigskip Since the velocity perturbation
\begin{align*}
v^{x}(t;x,y)=& -\partial _{y}\psi (t;x,y)=(-\partial _{y}+t\partial
_{z})\phi (t;z,y), \\
v^{y}(t;x,y)=& \partial _{x}\psi (t;x,y)=\partial _{z}\phi (t;z,y),
\end{align*}%
so by (\ref{fourier-psi}), we have
\begin{align*}
\left\vert \hat{v}^{x}\left( t;k,\eta \right) \right\vert & =\left\vert iks%
\hat{\phi}(t;k,\eta )\right\vert \\
& \leq \left\langle s\right\rangle ^{-\frac{1}{2}+\nu }\left\langle
s_{0}\right\rangle ^{\frac{3}{2}+\nu }\left( \left\vert k\right\vert
\left\vert \hat{\psi}^{0}(k,\eta )\right\vert +\frac{\left\vert \hat{T}%
^{0}(k,\eta )\right\vert }{\left\langle s_{0}\right\rangle }\right) , \\
\left\vert \hat{v}^{y}\left( t;k,\eta \right) \right\vert & =\left\vert ik%
\hat{\phi}(t;k,\eta )\right\vert \\
& \leq \left\langle s\right\rangle ^{-\frac{3}{2}+\nu }\left\langle
s_{0}\right\rangle ^{\frac{3}{2}+\nu }\left( \left\vert k\right\vert
\left\vert \hat{\psi}^{0}(k,\eta )\right\vert +\frac{\left\vert \hat{T}%
^{0}(k,\eta )\right\vert }{\left\langle s_{0}\right\rangle }\right) .
\end{align*}%
From equation (\ref{decayOfDensity}) we know
\begin{align*}
\left\vert \hat{\tau}(t;k,\eta )\right\vert \leq & \left\vert \frac{k}{B^{2}}%
\right\vert \left[ (1+s^{2})\left\vert C_{1}(k,\eta )g_{1}^{\prime
}(s)+C_{2}(k,\eta )g_{2}^{\prime }(s)\right\vert \right. \\
\ \ & \left. +2|s|\left\vert C_{1}(k,\eta )g_{1}(s)+C_{2}(k,\eta
)g_{2}(s)\right\vert \right] \\
\lesssim & \left\langle s\right\rangle ^{-\frac{1}{2}+\nu }\left\langle
s_{0}\right\rangle ^{\frac{3}{2}+\nu }\left( |k|\left\vert \hat{\psi}%
^{0}(k,\eta )\right\vert +\frac{\left\vert \hat{T}^{0}(k,\eta )\right\vert }{%
\left\langle s_{0}\right\rangle }\right) .
\end{align*}%
By Lemma \ref{lemma-decay},
\begin{align*}
\Vert P_{\neq 0}v^{x}\Vert _{L^{2}}\lesssim & \left\langle t\right\rangle ^{-%
\frac{1}{2}+\nu }\left( \Vert \psi ^{0}\Vert _{H_{x}^{1}H_{y}^{2}}+\Vert
T^{0}\Vert _{L_{x}^{2}H_{y}^{1}}\right) , \\
\Vert v^{y}\Vert _{L^{2}}\lesssim & \left\langle t\right\rangle ^{-\frac{3}{2%
}+\nu }\left( \Vert \psi ^{0}\Vert _{H_{x}^{1}H_{y}^{3}}+\Vert T^{0}\Vert
_{L_{x}^{2}H_{y}^{2}}\right) ,
\end{align*}%
and
\begin{equation*}
\Vert P_{\neq 0}T(t;\cdot ,\cdot )\Vert _{L^{2}}=\Vert P_{\neq 0}\tau
(t;\cdot ,\cdot )\Vert _{L^{2}}\lesssim \left\langle t\right\rangle ^{-\frac{%
1}{2}+\nu }\left( \Vert \psi ^{0}\Vert _{H_{x}^{1}H_{y}^{2}}+\Vert
T^{0}\Vert _{L_{x}^{2}H_{y}^{1}}\right) .
\end{equation*}

\subsubsection{The case $B^{2}>\frac{1}{4}$}

In this case, $\nu =\sqrt{\frac{1}{4}-B^{2}}$ is pure imaginary. Then from (%
\ref{asymptotics-g1}-\ref{asymptotics-g2'}), we have
\begin{align*}
|g_{1}(s)|\lesssim & \left\langle s\right\rangle ^{-\frac{3}{2}},\
|g_{2}(s)|\lesssim \left\langle s\right\rangle ^{-\frac{3}{2}}, \\
|g_{1}^{\prime }(s_{0})|\lesssim & \left\langle s_{0}\right\rangle ^{-\frac{5%
}{2}},\ |g_{2}^{\prime }(s_{0})|\lesssim \left\langle s_{0}\right\rangle ^{-%
\frac{5}{2}}.
\end{align*}%
By similar calculations,
\begin{align*}
\Vert P_{\neq 0}v^{x}\Vert _{L^{2}}\lesssim & \left\langle t\right\rangle ^{-%
\frac{1}{2}}\left( \Vert \psi ^{0}\Vert _{H_{x}^{1}H_{y}^{2}}+\Vert
T^{0}\Vert _{L_{x}^{2}H_{y}^{1}}\right) , \\
\Vert v^{y}\Vert _{L^{2}}\lesssim & \left\langle t\right\rangle ^{-\frac{3}{2%
}}\left( \Vert \psi ^{0}\Vert _{H_{x}^{1}H_{y}^{3}}+\Vert T^{0}\Vert
_{L_{x}^{2}H_{y}^{2}}\right) , \\
\Vert P_{\neq 0}T\Vert _{L^{2}}\lesssim & \left\langle t\right\rangle ^{-%
\frac{1}{2}}\left( \Vert \psi ^{0}\Vert _{H_{x}^{1}H_{y}^{2}}+\Vert
T^{0}\Vert _{L_{x}^{2}H_{y}^{1}}\right) .
\end{align*}%
Since $T$ is just $\rho /A$ times a positive constant, this completes the
proof of Theorem \ref{thm-boussinesq}(i)-(ii).

\subsection{The case $B^{2}=\frac{1}{4}$}

When $B^{2}=\frac{1}{4}$, $\nu =0$, the asymptotic approximations (\ref%
{asymptotics-g1}) and (\ref{asymptotics-g2}) no longer hold true, but the
following expansions at infinity emerge instead,
\begin{align*}
g_{1}(s)& =F\left( \frac{3}{4},\frac{3}{4};\frac{1}{2};-s^{2}\right) \\
& =\frac{2\sqrt{\pi }}{\Gamma \left( -\frac{1}{4}\right) \Gamma \left( \frac{%
3}{4}\right) }s^{-\frac{3}{2}}\log \left( s\right) -\frac{2\sqrt{\pi }\left(
\gamma +\digamma \left( \frac{3}{4}\right) +2\right) }{\Gamma \left( -\frac{1%
}{4}\right) \Gamma \left( \frac{3}{4}\right) }s^{-\frac{3}{2}}+O\left( |s|^{-%
\frac{7}{2}}\right) , \\
g_{2}(s)& =sF\left( \frac{5}{4},\frac{5}{4};\frac{3}{2};-s^{2}\right) \\
& =\frac{\sqrt{\pi }}{\Gamma \left( \frac{1}{4}\right) \Gamma \left( \frac{5%
}{4}\right) }s^{-\frac{3}{2}}\log \left( s\right) -\frac{\sqrt{\pi }\left(
\gamma +\digamma \left( \frac{1}{4}\right) +2\right) }{\Gamma \left( \frac{1%
}{4}\right) \Gamma \left( \frac{5}{4}\right) }s^{-\frac{3}{2}}+O\left( |s|^{-%
\frac{7}{2}}\right)
\end{align*}%
where $\gamma $ is the Euler constant, $\digamma (x)=\frac{\Gamma ^{\prime
}(x)}{\Gamma (x)}$ is the digamma function. It can be seen that with the
logarithm function, both solutions decay a little bit slower than before.

Similarly, their derivatives also have different asymptotic approximations
\begin{align*}
g_{1}^{\prime }(s_{0})=& -\frac{9}{4}s_{0}F\left( \frac{7}{4},\frac{7}{4};%
\frac{3}{2};-s_{0}^{2}\right) \\
=& -\frac{3\sqrt{\pi }}{\Gamma \left( -\frac{1}{4}\right) \Gamma \left(
\frac{3}{4}\right) }s_{0}^{-\frac{5}{2}}\log \left( s_{0}\right) \\
& \ \ \ \ \ +\frac{3\sqrt{\pi }\left( \gamma +\digamma \left( \frac{3}{4}%
\right) +\frac{8}{3}\right) }{\Gamma \left( -\frac{1}{4}\right) \Gamma
\left( \frac{3}{4}\right) }s_{0}^{-\frac{5}{2}}+O\left( |s_{0}|^{-\frac{7}{2}%
}\right) , \\
g_{2}^{\prime }(s_{0})=& F\left( \frac{5}{4},\frac{5}{4};\frac{3}{2}%
;-s_{0}^{2}\right) -\frac{25}{12}s_{0}^{2}F\left( \frac{9}{4},\frac{9}{4};%
\frac{5}{2};-s_{0}^{2}\right) \\
=& -\frac{3\sqrt{\pi }}{2\Gamma \left( \frac{1}{4}\right) \Gamma \left(
\frac{5}{4}\right) }s_{0}^{-\frac{5}{2}}\log \left( s_{0}\right) \\
& \ \ \ \ \ +\frac{3\sqrt{\pi }\left( \gamma +\digamma \left( \frac{1}{4}%
\right) +\frac{8}{3}\right) }{2\Gamma \left( \frac{1}{4}\right) \Gamma
\left( \frac{5}{4}\right) }s_{0}^{-\frac{5}{2}}+O\left( |s_{0}|^{-\frac{7}{2}%
}\right) .
\end{align*}%
Therefore, we obtain the following estimates
\begin{align*}
\left\vert g_{1}(s)\right\vert \lesssim & \left\langle s\right\rangle ^{-%
\frac{3}{2}}\left\langle \log \left\langle s\right\rangle \right\rangle ,\
\left\vert g_{2}(s)\right\vert \lesssim \left\langle s\right\rangle ^{-\frac{%
3}{2}}\left\langle \log \left\langle s\right\rangle \right\rangle , \\
\left\vert g_{1}^{\prime }(s_{0})\right\vert \lesssim & \left\langle
s_{0}\right\rangle ^{-\frac{5}{2}}\left\langle \log \left\langle
s_{0}\right\rangle \right\rangle ,\ \left\vert g_{2}^{\prime
}(s_{0})\right\vert \lesssim \left\langle s_{0}\right\rangle ^{-\frac{5}{2}%
}\left\langle \log \left\langle s_{0}\right\rangle \right\rangle ,
\end{align*}%
and as a result%
\begin{align*}
\left\vert C_{1}(k,\eta )\right\vert \lesssim & \left\langle
s_{0}\right\rangle ^{\frac{3}{2}}\left\langle \log \left\langle
s_{0}\right\rangle \right\rangle \left( \left\vert \hat{\psi}^{0}(k,\eta
)\right\vert +\frac{\left\vert \hat{T}^{0}(k,\eta )\right\vert }{%
\left\langle s_{0}\right\rangle |k|}\right) , \\
\left\vert C_{2}(k,\eta )\right\vert \lesssim & \left\langle
s_{0}\right\rangle ^{\frac{3}{2}}\left\langle \log \left\langle
s_{0}\right\rangle \right\rangle \left( \left\vert \hat{\psi}^{0}(k,\eta
)\right\vert +\frac{\left\vert \hat{T}^{0}(k,\eta )\right\vert }{%
\left\langle s_{0}\right\rangle |k|}\right) .
\end{align*}%
Therefore, we have
\begin{align*}
\left\vert \hat{\phi}(t;k,\eta )\right\vert =& \left\vert C_{1}(k,\eta
)g_{1}(s)+C_{2}(k,\eta )g_{2}(s)\right\vert \\
\lesssim & \left\langle s\right\rangle ^{-\frac{3}{2}}\left\langle
s_{0}\right\rangle ^{\frac{3}{2}}\left\langle \log \left\langle
s\right\rangle \right\rangle \left\langle \log \left\langle
s_{0}\right\rangle \right\rangle \left( \left\vert \hat{\psi}^{0}(k,\eta
)\right\vert +\frac{\left\vert \hat{T}^{0}(k,\eta )\right\vert }{%
\left\langle s_{0}\right\rangle |k|}\right) ,
\end{align*}%
from which the estimates of $\left\vert \hat{v}^{x}\left( t;k,\eta \right)
\right\vert ,\ \left\vert \hat{v}^{y}\left( t;k,\eta \right) \right\vert $
and $\left\vert \hat{\tau}(t;k,\eta )\right\vert $ follow. Then the decay
rates of $v^{x},v^{y},\ T$ can be obtained similarly as in the proof of
Lemma \ref{lemma-decay}, so we only sketch it. Notice that for any $a\geq
\frac{1}{2}$, the function $h\left( x\right) =\frac{\left\langle
x\right\rangle ^{a}}{\left\langle \log \left\langle x\right\rangle
\right\rangle }$ is increasing for all $x\geq 0$. When $\left\vert
s\right\vert \leq \frac{1}{2}|t|$ (implying $\left\vert s_{0}\right\vert
\geq \frac{1}{2}|t|$), we have%
\begin{align*}
\left\langle s\right\rangle ^{-a}\left\langle s_{0}\right\rangle ^{\frac{3}{2%
}}\left\langle \log \left\langle s\right\rangle \right\rangle \left\langle
\log \left\langle s_{0}\right\rangle \right\rangle \leq & \left\langle
s_{0}\right\rangle ^{\frac{3}{2}}\left\langle \log \left\langle
s_{0}\right\rangle \right\rangle \leq \frac{h\left( s_{0}\right) }{h\left(
\frac{1}{2}t\right) }\left\langle s_{0}\right\rangle ^{\frac{3}{2}%
}\left\langle \log \left\langle s_{0}\right\rangle \right\rangle \\
\lesssim & \left\langle t\right\rangle ^{-a}\left\langle \log \left\langle
t\right\rangle \right\rangle \left\langle s_{0}\right\rangle ^{\frac{3}{2}%
+a}.
\end{align*}%
On the other hand, when $\left\vert s\right\vert \geq \frac{1}{2}|t|$, we
have
\begin{equation*}
\left\langle s\right\rangle ^{-a}\left\langle s_{0}\right\rangle ^{\frac{3}{2%
}}\left\langle \log \left\langle s\right\rangle \right\rangle \left\langle
\log \left\langle s_{0}\right\rangle \right\rangle \lesssim \left\langle
t\right\rangle ^{-a}\left\langle \log \left\langle t\right\rangle
\right\rangle \left\langle s_{0}\right\rangle ^{\frac{3}{2}+a},
\end{equation*}%
since $\left\langle \log \left\langle s_{0}\right\rangle \right\rangle \leq
\left\langle s_{0}\right\rangle ^{a}$. Similar to the proof of Lemma \ref%
{lemma-decay}, we get
\begin{align*}
\Vert P_{\neq 0}v^{x}\Vert _{L^{2}}\lesssim & \left\langle t\right\rangle ^{-%
\frac{1}{2}}\left\langle \log \left\langle t\right\rangle \right\rangle
\left( \Vert \psi ^{0}\Vert _{H_{x}^{1}H_{y}^{2}}+\Vert T^{0}\Vert
_{L_{x}^{2}H_{y}^{1}}\right) , \\
\Vert v^{y}\Vert _{L^{2}}\lesssim & \left\langle t\right\rangle ^{-\frac{3}{2%
}}\left\langle \log \left\langle t\right\rangle \right\rangle \left( \Vert
\psi ^{0}\Vert _{H_{x}^{1}H_{y}^{3}}+\Vert T^{0}\Vert
_{L_{x}^{2}H_{y}^{2}}\right) .
\end{align*}%
and%
\begin{equation*}
\Vert P_{\neq 0}T\Vert _{L^{2}}\lesssim \left\langle t\right\rangle ^{-\frac{%
1}{2}}\left\langle \log \left\langle t\right\rangle \right\rangle \left(
\Vert \psi ^{0}\Vert _{H_{x}^{1}H_{y}^{2}}+\Vert T^{0}\Vert
_{L_{x}^{2}H_{y}^{1}}\right) .
\end{equation*}

\subsection{The case $B^{2}=0$}

When $B^{2}=0$, that is, $\beta =0$, then by (\ref{boussinesqR01})-(\ref%
{boussinesqR02}), we get
\begin{align*}
\left( \partial _{t}+Ry\partial _{x}\right) \Delta \psi =& -\partial
_{x}\left( \frac{\rho }{A}\right) g, \\
\left( \partial _{t}+Ry\partial _{x}\right) \left( \frac{\rho }{A}\right) =&
0.
\end{align*}%
For convenience, we let $R=1$. Again, we define
\begin{align*}
f(t;z,y)=& \omega (t;z+ty,y)=\omega (t;x,y), \\
\phi (t;z,y)=& \psi (t;z+ty,y)=\psi (t;x,y), \\
\tau (t;z,y)=& \frac{\rho }{A}(t;z+ty,y)=\frac{\rho }{A}(t;x,y).
\end{align*}%
Then
\begin{equation*}
\partial _{t}f(t;z,y)=g\partial _{z}\tau (t;z,y),\ \partial _{t}\tau
(t;z,y)=0.
\end{equation*}%
So
\begin{align*}
\hat{\tau}(t;k,\eta )=& \hat{\tau}(0;k,\eta ), \\
\hat{f}\left( t;k,\eta \right) =& \hat{f}\left( 0;k,\eta \right) +tikg\hat{%
\tau}(0;k,\eta )=\hat{\omega}^{0}(k,\eta )+tikg\hat{\rho ^{0}}\left( k,\eta
\right) ,
\end{align*}%
where $\omega (0;x,y)=\omega ^{0}(x,y),\ \frac{\rho }{A}(0;x,y)=\rho
^{0}\left( x,y\right) $. Thus by (\ref{fourier-psi-f}), we get
\begin{align*}
\left\vert \hat{\phi}(t;k,\eta )\right\vert =& \frac{1}{k^{2}\left(
1+s^{2}\right) }\left\vert \hat{f}\left( t;\eta ,k\right) \right\vert \\
\lesssim & \left\langle s\right\rangle ^{-2}\left\langle s_{0}\right\rangle
^{2}\left\vert \hat{\psi}^{0}(k,\eta )\right\vert +|t|\frac{1}{\left\vert
k\right\vert }\left\langle s\right\rangle ^{-2}\left\vert \hat{\rho}%
^{0}(k,\eta )\right\vert .
\end{align*}%
Therefore
\begin{equation*}
\left\vert \hat{v}^{x}\left( t;k,\eta \right) \right\vert \lesssim
\left\langle s\right\rangle ^{-1}\left\langle s_{0}\right\rangle
^{2}\left\vert k\right\vert \left\vert \hat{\psi}^{0}(k,\eta )\right\vert
+|t|\left\langle s\right\rangle ^{-1}\left\vert \hat{\rho}^{0}(k,\eta
)\right\vert ,
\end{equation*}%
\begin{equation*}
\left\vert \hat{v}^{y}\left( t;k,\eta \right) \right\vert \lesssim
\left\langle s\right\rangle ^{-2}\left\langle s_{0}\right\rangle
^{2}\left\vert k\right\vert \left\vert \hat{\psi}^{0}(k,\eta )\right\vert
+|t|\left\langle s\right\rangle ^{-2}\left\vert \hat{\rho}^{0}(k,\eta
)\right\vert .
\end{equation*}%
By Lemma \ref{lemma-decay}, we get
\begin{equation*}
\Vert P_{\neq 0}v^{x}\Vert _{L^{2}}\lesssim \Vert \rho ^{0}\Vert
_{L_{x}^{2}H_{y}^{1}}+\left\langle t\right\rangle ^{-1}\Vert \psi ^{0}\Vert
_{H_{x}^{1}H_{y}^{3}},
\end{equation*}%
\begin{equation*}
\Vert v^{y}\Vert _{L^{2}}\lesssim \left\langle t\right\rangle ^{-1}\Vert
\rho ^{0}\Vert _{L_{x}^{2}H_{y}^{2}}+\left\langle t\right\rangle ^{-2}\Vert
\psi ^{0}\Vert _{H_{x}^{1}H_{y}^{4}}.
\end{equation*}%
Also, $\left\Vert \frac{\rho }{A}\right\Vert _{L^{2}}\left( t\right)
=\left\Vert \rho ^{0}\right\Vert $. When $\rho ^{0}\neq 0$, there is no
decay for $\frac{\rho }{A}$ and $P_{\neq 0}v^{x}$. When $\rho ^{0}=0$, we
get
\begin{equation*}
\Vert P_{\neq 0}v^{x}\Vert _{L^{2}}\lesssim \left\langle t\right\rangle
^{-1}\Vert \psi ^{0}\Vert _{H_{x}^{1}H_{y}^{3}},\ \Vert v^{y}\Vert
_{L^{2}}\lesssim \left\langle t\right\rangle ^{-2}\Vert \psi ^{0}\Vert
_{H_{x}^{1}H_{y}^{4}},
\end{equation*}%
which exactly recovers the linear decay results in \cite{Lin-zeng}\ for the
homogeneous fluids.

\begin{remark}
For small $B>0$, the decay rates for $\Vert P_{\neq 0}v^{x}\Vert _{L^{2}}$
and $\Vert v^{y}\Vert _{L^{2}}$ are $t^{-\frac{1}{2}+\nu }$ and $t^{-\frac{3%
}{2}+\nu}$ respectively even when $\rho ^{0}=0$. Hence, if $B\rightarrow 0+$
(i.e. $\nu \rightarrow \frac{1}{2}-$), surprisingly the decay rates are
almost one order slower than the case of homogeneous fluids $(B=0$). This
apparent gap is due to the vanishing of the coefficient of $\left\langle
s\right\rangle ^{-\frac{3}{2}+\nu }$ terms in the asymptotics of
hypergeometric functions (\ref{asymptotics-g1})-(\ref{asymptotics-g2'}).
\end{remark}

\section{Decay estimates for the full Euler equation}

In this section, we prove the decay estimates in Theorem \ref{thm-full-euler}
for the linearized system of the full Euler equation. The proof is very
similar to the Boussinesq case, so we only sketch it.

\subsection{The case $0<B^{2}<\infty $}

For each $B^{2}>0$, we can find similar bounds for
\begin{equation*}
\hat{\chi}=C_{3}(k,\eta )g_{3}(s)+C_{4}(k,\eta )g_{4}(s)
\end{equation*}%
as in the Boussinesq case. For $B^{2}>0$ and $B^{2}\neq \frac{1}{4}$, the
asymptotics of $g_{3},g_{4}\ $at $s=\infty $ are
\begin{align*}
g_{3}(s)=& \left( \frac{1}{2}-\frac{i\kappa s}{2}\right) ^{-1-\beta _{1}}%
\left[ \frac{\Gamma (2-\beta _{1})\Gamma (-2\nu )}{\Gamma \left( \frac{3}{2}%
-\nu \right) \Gamma \left( \frac{1}{2}-\beta _{1}-\nu \right) }\left( -\frac{%
i\kappa s}{2}\right) ^{-\frac{1}{2}+\beta _{1}-\nu }\right. \\
& \left. +\frac{\Gamma (2-\beta _{1})\Gamma (2\nu )}{\Gamma \left( \frac{3}{2%
}+\nu \right) \Gamma \left( \frac{1}{2}-\beta _{1}+\nu \right) }\left( -%
\frac{i\kappa s}{2}\right) ^{-\frac{1}{2}+\beta _{1}+\nu }+O\left( |s|^{-%
\frac{3}{2}+Re(\nu )}\right) \right] \\
g_{4}(s)=& \left( \frac{1}{2}+\frac{i\kappa s}{2}\right) ^{-1+\beta _{1}}%
\left[ \frac{\Gamma (\beta _{1})\Gamma (-2\nu )}{\Gamma \left( -\frac{1}{2}%
-\nu \right) \Gamma \left( \frac{1}{2}+\beta _{1}-\nu \right) }\left( -\frac{%
i\kappa s}{2}\right) ^{-\frac{1}{2}-\beta _{1}-\nu }\right. \\
& +\left. \frac{\Gamma (\beta _{1})\Gamma (2\nu )}{\Gamma \left( -\frac{1}{2}%
+\nu \right) \Gamma \left( \frac{1}{2}+\beta _{1}+\nu \right) }\left( -\frac{%
i\kappa s}{2}\right) ^{-\frac{1}{2}-\beta _{1}+\nu }+O\left( |s|^{-\frac{3}{2%
}+Re(\nu )}\right) \right] \\
g_{3}^{\prime }(s)=& \left( \frac{i\kappa }{2}\right) \left( \frac{1}{2}-%
\frac{i\kappa s}{2}\right) ^{-\beta _{1}}\left[ \frac{\left( \frac{3}{2}+\nu
\right) \Gamma (2-\beta _{1})\Gamma (-2\nu )}{\Gamma \left( \frac{3}{2}-\nu
\right) \Gamma \left( \frac{1}{2}-\beta _{1}-\nu \right) }\left( -\frac{%
i\kappa s}{2}\right) ^{-\frac{5}{2}+\beta _{1}-\nu }\right. \\
& \left. +\frac{\left( \frac{3}{2}-\nu \right) \Gamma (2-\beta _{1})\Gamma
(2\nu )}{\Gamma \left( \frac{3}{2}+\nu \right) \Gamma \left( \frac{1}{2}%
-\beta _{1}+\nu \right) }\left( -\frac{i\kappa s}{2}\right) ^{-\frac{5}{2}%
+\beta _{1}+\nu }+O\left( |s|^{-\frac{7}{2}+Re(\nu )}\right) \right] \\
g_{4}^{\prime }(s)=& \left( \frac{i\kappa }{2}\right) \left( \frac{1}{2}+%
\frac{i\kappa s}{2}\right) ^{\beta _{1}}\left[ \frac{\left( -\frac{3}{2}-\nu
\right) \Gamma (2-\beta _{1})\Gamma (-2\nu )}{\Gamma \left( -\frac{1}{2}-\nu
\right) \Gamma \left( \frac{1}{2}+\beta _{1}-\nu \right) }\left( \frac{%
i\kappa s}{2}\right) ^{-\frac{5}{2}-\beta _{1}-\nu }\right. \\
& \left. +\frac{\left( -\frac{3}{2}+\nu \right) \Gamma (2-\beta _{1})\Gamma
(2\nu )}{\Gamma \left( -\frac{1}{2}+\nu \right) \Gamma \left( \frac{1}{2}%
+\beta _{1}+\nu \right) }\left( \frac{i\kappa s}{2}\right) ^{-\frac{5}{2}%
-\beta _{1}+\nu }+O\left( |s|^{-\frac{7}{2}+Re(\nu )}\right) \right] .
\end{align*}%
For $B^{2}=\frac{1}{4}$, the expansions at $s=\infty $ are
\begin{align*}
g_{3}(s)=& \left( \frac{1}{2}-\frac{i\kappa s}{2}\right) ^{-\beta _{1}} \\
& \times \left[ \frac{2\Gamma (2-\beta )}{\sqrt{\pi }\Gamma \left( \frac{1}{2%
}-\beta \right) }\left( -\frac{i\kappa s}{2}\right) ^{-\frac{3}{2}+\beta
_{1}}\log \left( -\frac{i\kappa s}{2}\right) +O\left( |s|^{-\frac{3}{2}%
+\beta _{1}}\right) \right] , \\
g_{4}(s)=& \left( \frac{1}{2}+\frac{i\kappa s}{2}\right) ^{\beta _{1}} \\
& \times \left[ \frac{\Gamma (\beta )}{2\sqrt{\pi }\Gamma \left( \frac{1}{2}%
+\beta \right) }\left( -\frac{i\kappa s}{2}\right) ^{-\frac{3}{2}-\beta
_{1}}\log \left( -\frac{i\kappa s}{2}\right) +O\left( |s|^{-\frac{3}{2}%
-\beta _{1}}\right) \right] ,
\end{align*}%
\begin{align*}
g_{3}^{\prime }(s)=& \left( \frac{i\kappa }{2}\right) \left( \frac{1}{2}-%
\frac{i\kappa s}{2}\right) ^{-\beta _{1}} \\
& \times \left[ \frac{3\Gamma (2-\beta )}{\sqrt{\pi }\Gamma \left( \frac{1}{2%
}-\beta \right) }\left( -\frac{i\kappa s}{2}\right) ^{-\frac{5}{2}+\beta
_{1}}\log \left( -\frac{i\kappa s}{2}\right) +O\left( |s|^{-\frac{5}{2}%
+\beta _{1}}\right) \right] ,
\end{align*}%
\begin{align*}
g_{4}^{\prime }(s)=& \left( \frac{i\kappa }{2}\right) \left( \frac{1}{2}+%
\frac{i\kappa s}{2}\right) ^{\beta _{1}} \\
& \times \left[ \frac{3\Gamma (\beta )}{4\sqrt{\pi }\Gamma \left( \frac{1}{2}%
+\beta \right) }\left( -\frac{i\kappa s}{2}\right) ^{-\frac{5}{2}-\beta
_{1}}\log \left( -\frac{i\kappa s}{2}\right) +O\left( |s|^{-\frac{5}{2}%
-\beta _{1}}\right) \right] .
\end{align*}%
Thus, we have the same bounds for $\hat{\chi}$, that is,

\begin{equation*}
\left\vert \hat{\chi}(t;k,\eta )\right\vert \lesssim \left\langle
s\right\rangle ^{-\frac{3}{2}+\nu }\left\langle s_{0}\right\rangle ^{\frac{3%
}{2}+\nu }\left( \left\vert \hat{\Psi}^{0}(k,\eta )\right\vert +\frac{%
\left\vert \hat{\Upsilon}^{0}(k,\eta )\right\vert }{\left\langle
s_{0}\right\rangle |k|}\right) ,
\end{equation*}%
when $0<B^{2}<\frac{1}{4};$%
\begin{equation*}
\left\vert \hat{\chi}(t;k,\eta )\right\vert \lesssim \left\langle
s\right\rangle ^{-\frac{3}{2}}\left\langle s_{0}\right\rangle ^{\frac{3}{2}%
}\left( \left\vert \hat{\Psi}^{0}(k,\eta )\right\vert +\frac{\left\vert \hat{%
\Upsilon}^{0}(k,\eta )\right\vert }{\left\langle s_{0}\right\rangle |k|}%
\right) ,
\end{equation*}%
when $B^{2}>\frac{1}{4}$, and
\begin{equation*}
\left\vert \hat{\chi}(t;k,\eta )\right\vert \lesssim \left\langle
s\right\rangle ^{-\frac{3}{2}}\left\langle s_{0}\right\rangle ^{\frac{3}{2}%
}\left\langle \log \left\langle s\right\rangle \right\rangle \left\langle
\log \left\langle s_{0}\right\rangle \right\rangle \left( \left\vert \hat{%
\Psi}^{0}(k,\eta )\right\vert +\frac{\left\vert \hat{\Upsilon}^{0}(k,\eta
)\right\vert }{\left\langle s_{0}\right\rangle |k|}\right) ,
\end{equation*}%
when $B^{2}=\frac{1}{4}$.

Since
\begin{equation*}
e^{-\frac{1}{2}\beta y}v^{y}(t;x,y)=e^{-\frac{1}{2}\beta y}\partial _{x}\psi
(t;x,y)=\partial _{x}e^{-\frac{1}{2}\beta y}\phi (t;x-ty,y)=\partial
_{z}\chi (t;z,y),
\end{equation*}%
\begin{align*}
e^{-\frac{1}{2}\beta y}v^{x}(t;x,y)=& e^{-\frac{1}{2}\beta y}\left(
-\partial _{y}\psi (t;x,y)\right) =e^{-\frac{1}{2}\beta y}(-\partial
_{y}+t\partial _{z})\phi (t;z,y) \\
=& (-\partial _{y}+t\partial _{z})\left( e^{-\frac{1}{2}\beta y}\phi
(t;z,y)\right) -\frac{1}{2}\beta e^{-\frac{1}{2}\beta y}\phi (t;z,y) \\
=& \left( -\partial _{y}+t\partial _{z}-\frac{1}{2}\beta \right) \chi
(t;x,y),
\end{align*}%
the decay estimates for $e^{-\frac{1}{2}\beta y}v^{x}$ and $e^{-\frac{1}{2}%
\beta y}v^{y}$ (in Theorem \ref{thm-full-euler} (i)-(iii)) can be proved as
in the Boussinesq case. The decay of the density variation can be obtained
similarly.

\subsection{The case $B^{2}=0$}

When $B^{2}=0$, i.e., $\beta =0$, the linearized equations are exactly the
same as the Boussinesq case. Thus all the estimates are the same.

\section{Dispersive decay in the absence of shear}

The shear plays a crucial role in the inviscid damping. Without a shear, the
decay mechanism is totally different. When $B^{2}<\infty $, the decay of $%
\left\Vert \boldsymbol{v}\right\Vert _{L^{2}}$ is due to the mixing of
vorticity caused by the shear motion. When $B^{2}=\infty $, $\left\Vert
\boldsymbol{v}\right\Vert _{L^{2}}$ does not decay but we have the decay of $%
\left\Vert \boldsymbol{v}\right\Vert _{L^{\infty }}$ due to dispersive
effects of the linear waves in a stably stratified fluid.

\subsection{Boussinesq Case}

When there is no shear, i.e. $R=0$, $B^{2}=\infty $, the equations (\ref%
{boussinesqR01}-\ref{boussinesqR02}) become

\begin{equation*}
\partial _{t}\Delta \psi =-\partial _{x}\left( \frac{\rho }{A}\right) g,\ \
\ \ \ \ \partial _{t}\left( \frac{\rho }{A}\right) =\beta \partial _{x}\psi .
\end{equation*}%
Denote $T=\frac{\rho }{\beta A},$ then above equations become
\begin{equation}
\Delta \psi _{t}=-\partial _{x}T\beta g,  \label{eqn-RT-B-1}
\end{equation}%
\begin{equation}
\partial _{t}T=\partial _{x}\psi .  \label{eqn-RT-B-2}
\end{equation}

\subsubsection{The $L^{2}$ Stability}

Multiplying (\ref{eqn-RT-B-1}) by $\psi $ and then integrating by parts with
(\ref{eqn-RT-B-2}), we get the following invariant%
\begin{equation*}
\frac{\mathrm{d}}{\mathrm{d}t}\left( \beta g\iint T^{2}\mathrm{d}x\mathrm{d}%
y+\iint \left\vert \nabla \psi \right\vert ^{2}\mathrm{d}x\mathrm{d}y\right)
=0.
\end{equation*}%
This shows that in the $L^{2}$ norm, the perturbations of velocity and
density are Liapunov stable but do not decay. However, below we show that
their $L^{\infty }$ norms decay due to the dispersive effects.

\subsubsection{The $L^{\infty }$ Decay}

First, we solve (\ref{eqn-RT-B-1})-(\ref{eqn-RT-B-2}) by Fourier transforms.
Denote $N^{2}=\beta g$ to be the squared Brunt-V\"{a}is\"{a}l\"{a}
frequency. By Fourier transform $(x,y)\rightarrow (k,\eta )$,
\begin{equation}
\left( (i\eta )^{2}+(ik)^{2}\right) \hat{\psi} _{t}=-(ik)N^{2}\hat{T},
\label{eqn-RT-B-Fourier-1}
\end{equation}%
\begin{equation}
\hat{T}_{t}=(ik)\hat{\psi }.  \label{eqn-RT-B-Fourier-2}
\end{equation}%
Combining (\ref{eqn-RT-B-Fourier-1})-(\ref{eqn-RT-B-Fourier-2}), we get
\begin{equation*}
\frac{d^{2}}{dt^{2}}\hat{\psi}=-\lambda ^{2}\hat{\psi},
\end{equation*}%
where ${\lambda ^{2}(k,\eta )=\frac{k^{2}N^{2}}{k^{2}+\eta ^{2}}}$. For $k
\neq 0$, its solutions are
\begin{equation*}
\hat{\psi}(t)=C_{1}e^{i\lambda t}+C_{2}e^{-i\lambda t}.
\end{equation*}%
By initial conditions,
\begin{equation*}
\hat{\psi}(0)=C_{1}+C_{2}=\hat{\psi}^{0},\ \ \hat{\psi}^{\prime
}(0)=i\lambda (C_{1}-C_{2})=\frac{i\lambda ^{2}}{k}\hat{T}^{0},
\end{equation*}%
thus%
\begin{equation*}
C_{1,2}=\frac{1}{2}\left( \hat{\psi}^{0}\pm \frac{\lambda }{k}\hat{T}%
^{0}\right) .
\end{equation*}%
By (\ref{eqn-RT-B-Fourier-1}),
\begin{equation*}
\hat{T}=-\frac{ik}{\lambda ^{2}}\hat{\psi}_{t}=\frac{k}{\lambda }\left(
C_{1}e^{i\lambda t}-C_{2}e^{-i\lambda t}\right) .
\end{equation*}%
To prove the $L^{\infty }$ decay of solutions, we need two lemmas. We refer to Souganidis and Strauss (\cite{souganidis-strauss}) for a general discussion on such dispersive decay.

\begin{lem}
\label{lemmavandercorput} (Van der Corput) Let $h(x)$ be either convex or
concave on $[a,b]$ with $-\infty \leq a<b\leq \infty $. Then
\begin{equation}
\left\vert \int_{b}^{a}e^{ih(\eta )}\mathrm{d} \eta \right\vert \leq 2\left(
\min_{[a,b]}|h^{\prime }|\right) ^{-1},\ \ \left\vert \int_{b}^{a}e^{ih(\eta
)}\mathrm{d} \eta \right\vert \leq 4\left( \min_{[a,b]}|h^{\prime \prime
}|\right) ^{-\frac{1}{2}}.  \label{vanderCorput}
\end{equation}
\end{lem}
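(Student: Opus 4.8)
The plan is to treat the two inequalities separately: the first is the classical first--derivative van der Corput test, and the second is deduced from it by splitting the interval of integration around the critical point of the phase.

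First I would prove $\left|\int_a^b e^{ih(\eta)}\,\mathrm{d}\eta\right|\le 2\left(\min_{[a,b]}|h'|\right)^{-1}$. Since $h$ is convex or concave on $[a,b]$, its derivative $h'$ is monotone. If $h'$ vanishes somewhere on $[a,b]$ the right-hand side is $+\infty$ and there is nothing to prove, so I may assume $h'$ has constant sign; then $g:=1/h'$ is monotone, of constant sign, with $\sup_{[a,b]}|g|=\left(\min_{[a,b]}|h'|\right)^{-1}$, the supremum attained at an endpoint. Writing $e^{ih(\eta)}=\frac{1}{i}\,g(\eta)\,\frac{\mathrm{d}}{\mathrm{d}\eta}e^{ih(\eta)}$ and applying the second mean value theorem for integrals (Bonnet's form) to the monotone factor $g$, there is $\xi\in[a,b]$ with
\begin{equation*}
\int_a^b e^{ih(\eta)}\,\mathrm{d}\eta=\frac{1}{i}\Big[-g(a)e^{ih(a)}+\big(g(a)-g(b)\big)e^{ih(\xi)}+g(b)e^{ih(b)}\Big].
\end{equation*}
Since $|e^{ih}|\equiv 1$ and $g(a),g(b)$ have the same sign, $|g(a)|+|g(a)-g(b)|+|g(b)|=2\max\big(|g(a)|,|g(b)|\big)=2\left(\min_{[a,b]}|h'|\right)^{-1}$, which is the first bound. (One could instead integrate by parts, bounding the boundary term by $2(\min|h'|)^{-1}$ and the remaining integral by the total variation of $1/h'$; this is slightly lossier but of the same order.)

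Next I would deduce the second bound. Assume without loss of generality that $h''\ge m:=\min_{[a,b]}|h''|$, with $m>0$ (otherwise the inequality is vacuous); the concave case is symmetric. Then $h'$ is strictly increasing, hence has at most one zero, so $|h'|$ attains its minimum on $[a,b]$ at a single point $c$ (a zero of $h'$ if one exists, an endpoint otherwise), and $|h'(\eta)|\ge m|\eta-c|$ for all $\eta\in[a,b]$. For $\delta>0$, split $[a,b]$ into $J_\delta:=[a,b]\cap(c-\delta,c+\delta)$, on which I bound the integral trivially by $|J_\delta|\le 2\delta$, and the complement, which is at most two intervals on each of which $|h'|\ge m\delta$, so the first bound gives at most $2/(m\delta)$ per interval. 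Thus $\left|\int_a^b e^{ih}\right|\le 2\delta+4/(m\delta)$, and choosing $\delta\sim m^{-1/2}$ yields a bound $C\left(\min_{[a,b]}|h''|\right)^{-1/2}$ with $C$ a universal constant; a careful choice of $\delta$ and accounting (using, when $c$ is an endpoint, that one of the two side intervals is empty) gives the stated $C=4$.

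I expect no conceptual obstacle; the care needed is in the constants and in admitting $a=-\infty$ or $b=\infty$. For the latter I would first establish both estimates on every compact subinterval $[a',b']\subseteq[a,b]$, with constants depending only on $\min_{[a,b]}|h'|$ and $\min_{[a,b]}|h''|$ (these minima over the larger interval only weaken the bounds), and then pass to the limit $a'\to a$, $b'\to b$; this is legitimate because in the applications below the oscillatory integrals converge as improper integrals. The only genuinely delicate point is obtaining exactly the constants $2$ and $4$ rather than some larger absolute constant, which is what the Bonnet-form regrouping in the first estimate and the optimized splitting in the second are designed for — though any fixed constant would suffice for the dispersive estimates using this lemma.
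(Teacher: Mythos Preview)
The paper does not prove this lemma at all; it is stated as the classical Van der Corput lemma and used as a black box in the proof of Lemma~\ref{oscillatingIntegral}. So there is nothing to compare your argument against.

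Your proof is essentially the standard one and is sound in outline. One small point: Bonnet's second mean value theorem applies to real-valued integrands, so your displayed identity with a single $\xi$ is not literally correct for the complex integrand $e^{ih}$; applying it separately to $\cos h$ and $\sin h$ would produce two different intermediate points. The integration-by-parts route you mention as an alternative avoids this and in fact is not lossier: writing $\int_a^b e^{ih}=\bigl[e^{ih}/(ih')\bigr]_a^b-\int_a^b e^{ih}\,\mathrm{d}(1/(ih'))$, the boundary term is at most $|1/h'(a)|+|1/h'(b)|$ and, since $1/h'$ is monotone of constant sign, the remaining integral is at most $\bigl|1/h'(b)-1/h'(a)\bigr|$; the three terms combine to $2\max(|1/h'(a)|,|1/h'(b)|)=2(\min_{[a,b]}|h'|)^{-1}$. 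For the second inequality your splitting argument gives $2\delta+4/(m\delta)$, whose minimum is $4\sqrt{2}\,m^{-1/2}$ rather than $4m^{-1/2}$; as you note, any absolute constant suffices for the application, so this is harmless here.
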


\begin{lem}
\label{oscillatingIntegral} For $\lambda (k,\eta )=\frac{|k|N}{\sqrt{%
k^{2}+\eta ^{2}}}$ and $n$ sufficiently large,
\begin{equation*}
\left\vert \int_{-n}^{n}e^{i(\lambda t+\eta y)}\mathrm{d} \eta \right\vert
\lesssim |k|^{\frac{3}{2}}|Nt|^{-\frac{1}{3}}+|Nt|^{-\frac{1}{2}}|k|^{-\frac{%
1}{2}}n^{\frac{3}{2}}.
\end{equation*}
\end{lem}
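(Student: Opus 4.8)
Write $h(\eta)=\lambda(k,\eta)\,t+\eta y$, so that the integral is $\int_{-n}^{n}e^{ih(\eta)}\,d\eta$, and the plan is to apply the Van der Corput estimates of Lemma \ref{lemmavandercorput} piece by piece. Since $h''=t\,\lambda_{\eta\eta}$ and $h'''=t\,\lambda_{\eta\eta\eta}$, the linear term $\eta y$ never enters; in particular we only use the second-derivative bound (and implicitly the third-derivative one via a splitting trick) and never the $\min|h'|$ bound, so the estimate will be uniform in $y$. A direct computation gives
\[
\lambda_{\eta\eta}=|k|N\,(k^{2}+\eta^{2})^{-5/2}(2\eta^{2}-k^{2}),\qquad
\lambda_{\eta\eta\eta}=3|k|N\,\eta\,(k^{2}+\eta^{2})^{-7/2}(3k^{2}-2\eta^{2}).
\]
Hence $\lambda_{\eta\eta}$, and therefore $h''$, changes sign exactly at $\eta=\pm k/\sqrt2$; between its critical points $\eta=0,\pm k\sqrt{3/2}$ the function $\lambda_{\eta\eta}$ is monotone, so on each subinterval one can locate the minimum of $|\lambda_{\eta\eta}|$ at an endpoint. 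One records three facts: $|\lambda_{\eta\eta\eta}(\pm k/\sqrt2)|\sim N|k|^{-3}$, so $|\lambda_{\eta\eta}(\eta)|\gtrsim N|k|^{-3}|\eta\mp k/\sqrt2|$ near the degenerate points (for $|\eta\mp k/\sqrt2|\lesssim|k|$); $|\lambda_{\eta\eta}|\sim N|k|^{-2}$ on bounded $\eta$-ranges bounded away from $\pm k/\sqrt2$; and $|\lambda_{\eta\eta}|\sim|k|N|\eta|^{-3}$ as $|\eta|\to\infty$.

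The decomposition of $[-n,n]$ is then: (a) two $\delta$-neighbourhoods $I_{\pm}^{\delta}$ of the degenerate points $\pm k/\sqrt2$; (b) the remaining bounded part $\{|\eta|\le 2|k|\}\setminus(I_{+}^{\delta}\cup I_{-}^{\delta})$; and (c) the tail $\{2|k|\le|\eta|\le n\}$, with $\delta>0$ free. On each interval in (b) and (c) the sign of $h''$ is constant (the only sign changes of $\lambda_{\eta\eta}$ are at $\pm k/\sqrt2$), so Lemma \ref{lemmavandercorput} applies with the $\min|h''|^{-1/2}$ bound. On (a) the trivial bound contributes $\lesssim\delta$. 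On (b) the minimum of $|h''|$ sits at an endpoint adjacent to $I_{\pm}^{\delta}$, where $|h''|\gtrsim|t|\,N|k|^{-3}\delta$, so the contribution is $\lesssim (N|t||k|^{-3}\delta)^{-1/2}$. On (c), using $n$ large so that $2|k|\le n$ and $|\lambda_{\eta\eta}(n)|\gtrsim|k|Nn^{-3}$, the minimum of $|h''|$ is at $\eta=\pm n$ and is $\gtrsim|k|N|t|n^{-3}$, giving a contribution $\lesssim (|k|N|t|)^{-1/2}n^{3/2}$ — exactly the second term of the claim. Balancing (a) and (b) by choosing $\delta\sim|k|(N|t|)^{-1/3}$ yields, for the bounded region, $\lesssim |k|(N|t|)^{-1/3}\le|k|^{3/2}|Nt|^{-1/3}$; this uses $\delta\lesssim|k|$, i.e. $N|t|\gtrsim1$, which may be assumed since otherwise the asserted bound holds trivially for $n$ large. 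Summing the contributions of (a), (b), (c) gives the lemma.

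The main obstacle is the degeneracy $\lambda_{\eta\eta}(\pm k/\sqrt2)=0$: there is no uniform lower bound for $|h''|$, so the clean $|t|^{-1/2}$ second-derivative estimate fails near $\pm k/\sqrt2$ and must be replaced by the $\delta$-splitting above — a hands-on version of the third-derivative Van der Corput estimate — which is precisely what degrades the decay from $|t|^{-1/2}$ to $|t|^{-1/3}$. The only other point needing care is the slow decay $\lambda_{\eta\eta}\sim|k|N|\eta|^{-3}$ at infinity, which makes the far-tail estimate depend on the truncation $n$ and produces the $|Nt|^{-1/2}|k|^{-1/2}n^{3/2}$ term; tracking the exact powers of $k$ in $\lambda_{\eta\eta}$ and $\lambda_{\eta\eta\eta}$ throughout is what makes the $k$-dependence of the constants explicit.
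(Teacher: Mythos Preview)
Your proof is correct and follows essentially the same approach as the paper: isolate neighborhoods of the inflection points $\pm k/\sqrt{2}$ of $\lambda_{\eta\eta}$, use the trivial bound there and the second-derivative Van der Corput estimate on the complement, with the far tail contributing the $|Nt|^{-1/2}|k|^{-1/2}n^{3/2}$ term. The only cosmetic difference is that the paper first fixes an outer scale $\epsilon$ and then further splits each $\epsilon$-neighborhood at the inner scale $|t|^{-1/3}$, whereas you work directly with a single optimized scale $\delta\sim |k|(N|t|)^{-1/3}$; the two choices play the same role and yield the same bounds.
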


\begin{proof}
We can assume $N=1$ without loss of generality. Notice that
\begin{align*}
\lambda (\eta ) =&\frac{1}{\sqrt{1+\left( \frac{\eta }{k}\right) ^{2}}}%
=\left\langle \frac{\eta }{k}\right\rangle ^{-1}, \\
\lambda ^{\prime }(\eta ) =&-\frac{\eta }{k^{2}}\left\langle \frac{\eta }{k}%
\right\rangle ^{-3}, \\
\lambda ^{\prime \prime }(\eta ) =&\frac{2\eta ^{2}-k^{2}}{k^{4}}%
\left\langle \frac{\eta }{k}\right\rangle ^{-5},
\end{align*}%
and $\lambda (\eta )$ has two inflection point, $\eta _{1,2}=\pm \frac{\sqrt{%
2}}{2}k$. Let $n>\frac{\sqrt{2}}{2}|k|$. Choose $\epsilon >0$ so small that
all the Taylor's expansion below are valid in $\left( \eta _{i}-\epsilon
,\eta _{i}+\epsilon \right) ,\ i=1,2$. Define
\begin{equation*}
S_{1}=\left( -n,\eta _{1}-\epsilon \right) \cup \left( \eta _{1}+\epsilon
,\eta _{2}-\epsilon \right) \cup \left( \eta _{2}+\epsilon ,n\right) .
\end{equation*}%
By (\ref{vanderCorput}), we have
\begin{align*}
\left\vert \int_{S_{1}}e^{i(\lambda t+\eta y)}\mathrm{d} \eta \right\vert
\leq &4\left( \min_{[a,b]}|t||\lambda ^{\prime \prime }|\right) ^{-\frac{1}{%
2}} \\
=&4|t|^{-\frac{1}{2}}\left( \frac{2n^{2}-k^{2}}{k^{4}}\left\langle \frac{n}{%
k}\right\rangle ^{-5}\right) ^{-\frac{1}{2}} \\
\lesssim &|k|^{-\frac{1}{2}}|t|^{-\frac{1}{2}}n^{\frac{3}{2}},
\end{align*}%
provided $n=n(\epsilon )$ is sufficiently large. For large $t$, we can
divide $(\eta _{1}-\epsilon ,\eta _{1}+\epsilon )=\left\{ |t|^{-\frac{1}{3}%
}<|\eta -\eta _{1}|<\epsilon \right\} \cup \left\{ |\eta -\eta _{1}|\leq
|t|^{-\frac{1}{3}}\right\} =S_{2}\cup S_{3}$, so that
\begin{equation*}
\left\vert \int_{\eta _{1}-\epsilon }^{\eta _{1}+\epsilon }e^{i(\lambda
t+\eta y)}\mathrm{d} \eta \right\vert \leq 4|t|^{-\frac{1}{2}}\left(
\min_{S_{2}}|\lambda ^{\prime \prime }|\right) ^{-\frac{1}{2}}+2|t|^{-\frac{1%
}{3}}.
\end{equation*}%
For $\eta \in S_{2}$, we have
\begin{align*}
\left\vert \lambda ^{\prime \prime }(\eta )\right\vert =&\frac{\left\vert
2\eta ^{2}-k^{2}\right\vert }{k^{4}}\left\langle \frac{\eta }{k}%
\right\rangle ^{-5} \\
=&\frac{2\left\vert \eta -\eta _{1}\right\vert \left\vert \eta -\eta
_{2}\right\vert }{k^{4}}\left\langle \frac{\eta }{k}\right\rangle ^{-5} \\
>&\frac{2\left\vert \eta -\eta _{2}\right\vert }{k^{4}}\left\langle \frac{%
\eta }{k}\right\rangle ^{-5}|t|^{-\frac{1}{3}} \\
\gtrsim &|k|^{-3}|t|^{-\frac{1}{3}}.
\end{align*}%
Therefore
\begin{equation*}
\left\vert \int_{\eta _{1}-\epsilon }^{\eta _{1}+\epsilon }e^{i(\lambda
t+\eta y)}\mathrm{d} \eta \right\vert \lesssim 4|t|^{-\frac{1}{2}}\left(
|k|^{-3}|t|^{-\frac{1}{3}}\right) ^{-\frac{1}{2}}+2|t|^{-\frac{1}{3}%
}\lesssim |k|^{\frac{3}{2}}|t|^{-\frac{1}{3}}.
\end{equation*}%
Applying similar estimates to $(\eta _{2}-\epsilon ,\eta _{2}+\epsilon )$
will complete the proof of this lemma.
\end{proof}

Now we prove the $L^{\infty }$ decay of the solutions of (\ref{eqn-RT-B-1})-(%
\ref{eqn-RT-B-2}). By Fourier inverse transform formula,
\begin{align*}
P_{\neq 0}\psi (t;x,y)=& \frac{1}{2\pi }\sum_{k\neq 0}\left(
e^{ikx}\int_{-\infty }^{\infty }\hat{\psi}(t)e^{i\eta y}\mathrm{d}\eta
\right) \\
=& \frac{1}{2\pi }\sum_{k\neq 0}\left( e^{ikx}\int_{-\infty }^{\infty
}\left( C_{1}(k,\eta )e^{i\lambda t}+C_{2}(k,\eta )e^{-i\lambda t}\right)
e^{i\eta y}\mathrm{d}\eta \right) ,
\end{align*}%
where
\begin{align*}
& \left\vert \int_{-\infty }^{\infty }C_{1}(k,\eta )e^{i\lambda t}e^{i\eta y}%
\mathrm{d}\eta \right\vert \\
\leq & \frac{1}{2}\left\vert \int_{-\infty }^{\infty }\hat{\psi ^{0}}(k,\eta
)e^{i\lambda t}e^{i\eta y}\mathrm{d}\eta \right\vert +\frac{1}{2|k|}%
\left\vert \int_{-\infty }^{\infty }\lambda \hat{T^{0}}(k,\eta )e^{i\lambda
t}e^{i\eta y}\mathrm{d}\eta \right\vert .
\end{align*}%
Define
\begin{align*}
I(y)=& \int_{-n}^{n}e^{i\lambda (k,\eta )t}\hat{\psi ^{0}}(k,\eta )e^{i\eta
y}\mathrm{d}\eta \\
=& \sqrt{2\pi} \left( e^{i\lambda (k,\eta )t}\chi _{\lbrack -n,n]}\hat{\psi ^{0}}(k,\eta
)\right) ^{\vee }(y) \\
=& \left( e^{i\lambda (k,\eta )t}\chi _{\lbrack -n,n]}\right) ^{\vee }\ast
\hat{\psi ^{0}}(k,y),
\end{align*}%
then
\begin{align*}
\Vert I(y)\Vert _{L^{\infty }}\leq & \left\Vert \left( e^{i\lambda (k,\eta
)t}\chi _{\lbrack -n,n]}\right) ^{\vee }\right\Vert _{L_{y}^{\infty }}\Vert
\hat{\psi ^{0}}(k,\cdot )\Vert _{L_{y}^{1}} \\
\leq & \left\Vert \int_{-n}^{n}e^{i\lambda (k,\eta )t}e^{i\eta y}d\eta
\right\Vert _{L_{y}^{\infty }}\Vert \hat{\psi ^{0}}(k,\cdot )\Vert
_{L_{y}^{1}}.
\end{align*}%
Here, $^{\vee }$ stands for the inverse Fourier transform. By lemma \ref%
{oscillatingIntegral}, we have
\begin{align*}
& \left\vert \int_{-\infty }^{\infty }\hat{\psi ^{0}}(k,\eta )e^{i\lambda
t}e^{i\eta y}\mathrm{d}\eta \right\vert \\
\leq & \int_{|\eta |>n}\left\vert \hat{\psi ^{0}}(k,\eta )\right\vert
\mathrm{d}\eta +\left\vert I(y)\right\vert \\
\lesssim & \left( \int_{|\eta |>n}\left\langle \eta \right\rangle ^{-2\alpha
}\mathrm{d}\eta \right) ^{\frac{1}{2}}\Vert \hat{\psi ^{0}}(k,\cdot )\Vert
_{H_{y}^{\alpha }} \\
& +\left( |k|^{\frac{3}{2}}|Nt|^{-\frac{1}{3}}+|k|^{-\frac{1}{2}}|Nt|^{-%
\frac{1}{2}}n^{\frac{3}{2}}\right) \Vert \hat{\psi ^{0}}(k,\cdot )\Vert
_{L_{y}^{1}} \\
\lesssim & \left( n^{-\alpha +\frac{1}{2}}+|k|^{\frac{3}{2}}|t|^{-\frac{1}{3}%
}+|k|^{-\frac{1}{2}}|t|^{-\frac{1}{2}}n^{\frac{3}{2}}\right) \left( \Vert
\hat{\psi ^{0}}(k,\cdot )\Vert _{H_{y}^{\alpha }}+\Vert \hat{\psi ^{0}}%
(k,\cdot )\Vert _{L_{y}^{1}}\right) .
\end{align*}

Choose $n=|t|^{\frac{1}{2\alpha +2}}$, for $\alpha \in \left( \frac{1}{2},%
\frac{7}{2}\right] $, we have
\begin{equation*}
\left\vert \int_{-\infty }^{\infty }\hat{\psi ^{0}}(k,\eta )e^{i\lambda
t}e^{i\eta y}\mathrm{d}\eta \right\vert \lesssim |k|^{\frac{3}{2}}|t|^{-%
\frac{2\alpha -1}{4\alpha +4}}\left( \Vert \hat{\psi ^{0}}\Vert
_{H_{y}^{\alpha }}+\Vert \hat{\psi ^{0}}\Vert _{L_{y}^{1}}\right) .
\end{equation*}%
If the initial condition is smooth enough, then
\begin{equation*}
\left\vert \int_{-\infty }^{\infty }\hat{\psi ^{0}}(k,\eta )e^{i\lambda
t}e^{i\eta y}\mathrm{d}\eta \right\vert \lesssim |k|^{\frac{3}{2}}|t|^{-%
\frac{1}{3}}\left( \Vert \hat{\psi ^{0}}\Vert _{H_{y}^{7/2}}+\Vert \hat{\psi
^{0}}\Vert _{L_{y}^{1}}\right) .
\end{equation*}%
%
%
%
%
%
%
%
%
%
%
%
%
%
%
%
%
%
%
%
%
%
Similarly,
\begin{equation*}
\left\vert \int_{-\infty }^{\infty }\lambda \hat{T^{0}}(k,\eta )e^{i\lambda
t}e^{i\eta y}\mathrm{d}\eta \right\vert \lesssim N|k|^{\frac{3}{2}}|t|^{-%
\frac{1}{3}}\left( \Vert \hat{T^{0}}\Vert _{H_{y}^{7/2}}+\Vert \hat{T^{0}}%
\Vert _{L_{y}^{1}}\right) .
\end{equation*}%
Therefore, we have
\begin{align*}
\Vert P_{\neq 0}\hat{\psi}(t;k,\cdot )\Vert _{L_{y}^{\infty }}\lesssim &
|t|^{-\frac{1}{3}}\left( |k|^{\frac{3}{2}}\Vert \hat{\psi ^{0}}\Vert
_{H_{y}^{7/2}}+|k|^{\frac{3}{2}}\Vert \hat{\psi ^{0}}\Vert
_{L_{y}^{1}}\right. \\
& \left. +|k|^{\frac{1}{2}}\Vert \hat{T^{0}}\Vert _{H_{y}^{7/2}}+|k|^{\frac{1%
}{2}}\Vert \hat{T^{0}}\Vert _{L_{y}^{1}}\right) .
\end{align*}%
Hence the decay in $L_{x}^{2}L_{y}^{\infty }$ is obtained:
\begin{align*}
\Vert P_{\neq 0}\psi \Vert _{L_{x}^{2}L_{y}^{\infty }}\lesssim & |t|^{-\frac{%
1}{3}}\left( \Vert \psi ^{0}\Vert _{H_{x}^{3/2}H_{y}^{7/2}}+\Vert \psi
^{0}\Vert _{H_{x}^{3/2}L_{y}^{1}}\right. \\
& \left. +\Vert T^{0}\Vert _{H_{x}^{1/2}H_{y}^{7/2}}+\Vert T^{0}\Vert
_{H_{x}^{1/2}L_{y}^{1}}\right) , \\
\Vert P_{\neq 0}v^{x}\Vert _{L_{x}^{2}L_{y}^{\infty }}\lesssim & |t|^{-\frac{%
1}{3}}\left( \Vert \psi ^{0}\Vert _{H_{x}^{3/2}H_{y}^{9/2}}+\Vert \psi
^{0}\Vert _{H_{x}^{3/2}W_{y}^{1,1}}\right. \\
& \left. +\Vert T^{0}\Vert _{H_{x}^{1/2}H_{y}^{9/2}}+\Vert T^{0}\Vert
_{H_{x}^{1/2}W_{y}^{1,1}}\right) ,
\end{align*}%
\begin{align*}
\Vert v^{y}\Vert _{L_{x}^{2}L_{y}^{\infty }}\lesssim & |t|^{-\frac{1}{3}%
}\left( \Vert \psi ^{0}\Vert _{H_{x}^{5/2}H_{y}^{7/2}}+\Vert \psi ^{0}\Vert
_{H_{x}^{5/2}L_{y}^{1}}\right. \\
& \left. +\Vert T^{0}\Vert _{H_{x}^{3/2}H_{y}^{7/2}}+\Vert T^{0}\Vert
_{H_{x}^{3/2}L_{y}^{1}}\right) .
\end{align*}%
Similarly, for the density we have
\begin{align*}
\Vert P_{\neq 0}T\Vert _{L_{x}^{2}L_{y}^{\infty }}\lesssim & |t|^{-\frac{1}{3%
}}\left( \Vert \psi ^{0}\Vert _{H_{x}^{5/2}H_{y}^{9/2}}+\Vert \psi ^{0}\Vert
_{H_{x}^{5/2}W_{y}^{1,1}}\right. \\
& \left. +\Vert T^{0}\Vert _{H_{x}^{3/2}H_{y}^{7/2}}+\Vert T^{0}\Vert
_{H_{x}^{3/2}L_{y}^{1}}\right) .
\end{align*}

Below, we show that the decay rate $|t|^{-\frac{1}{3}}$ obtained above is
sharp by constructing an example. Recall that the solution to (\ref%
{eqn-RT-B-Fourier-1})-(\ref{eqn-RT-B-Fourier-2}) is
\begin{equation*}
\hat{\psi}(t;k,\eta )=C_{1}e^{i\lambda t}+C_{2}e^{-i\lambda t}.
\end{equation*}%
where $k\neq 0$, ${\lambda ^{2}(k,\eta )=\frac{k^{2}N^{2}}{k^{2}+\eta ^{2}}}$
and $C_{1,2}{(k,\eta )}$ are determined by $\hat{\psi}^{0},\hat{T}^{0}$.
Therefore, for a fixed $k$, we consider a function of the form
\begin{equation*}
\hat{\psi}(t;k,\eta )=f(\eta )e^{i\lambda t},
\end{equation*}%
where $f(\eta )$ is to be chosen below. By the Fourier inverse formula
\begin{equation*}
\psi (t;k,y)=\frac{1}{\sqrt{2\pi} }\int_{-\infty }^{\infty }f(\eta )e^{i\lambda
t+i\eta y}\mathrm{d}\eta .
\end{equation*}%
We will look at the value of $\psi \ $at $y=ct$ where $c$ is a constant to
be determined later. Define
\begin{equation*}
g(\eta ):=\lambda \left( \eta \right) +c\eta =\frac{kN}{\sqrt{k^{2}+\eta ^{2}%
}}+c\eta .
\end{equation*}%
We note that $\eta ^{\ast }=\frac{k}{\sqrt{2}}$ is one inflection point of $%
\lambda \left( \eta \right) \ $(the other one is $-\frac{k}{\sqrt{2}}$). Let
$c=\frac{2N}{3\sqrt{3}k},$ then $g^{\prime \prime }\left( \eta ^{\ast
}\right) =\lambda ^{\prime \prime }\left( \eta ^{\ast }\right) =0,$
\begin{equation*}
g^{\prime }\left( \eta ^{\ast }\right) =-\frac{\eta ^{\ast }N}{k^{2}}%
\left\langle \frac{\eta ^{\ast }}{k}\right\rangle ^{-3}+c=-\frac{2N}{3\sqrt{3%
}k}+c=0,
\end{equation*}%
and%
\begin{eqnarray*}
g^{\prime \prime \prime }\left( \eta ^{\ast }\right) &=&-\frac{N}{k^{3}}%
\left\langle \frac{\eta ^{\ast }}{k}\right\rangle ^{-7}\left( -9\frac{\eta
^{\ast }}{k}+6\left( \frac{\eta ^{\ast }}{k}\right) ^{3}\right) \\
&=&\frac{N}{k^{3}}\frac{16}{27}\sqrt{3}>0
\end{eqnarray*}%
Thus near $\eta ^{\ast }$, we have
\begin{equation}
g\left( \eta \right) =g(\eta ^{\ast })+\frac{1}{6}g^{\prime \prime \prime
}\left( \eta ^{\ast }\right) \left( \eta -\eta ^{\ast }\right) ^{3}+o\left(
\left( \eta -\eta ^{\ast }\right) ^{3}\right) ,  \label{expansion-g}
\end{equation}%
and
\begin{equation}
g^{\prime }\left( \eta \right) =\frac{1}{2}g^{\prime \prime \prime }\left(
\eta ^{\ast }\right) \left( \eta -\eta ^{\ast }\right) ^{2}+o\left( \left(
\eta -\eta ^{\ast }\right) ^{2}\right) .  \label{expansion-g'}
\end{equation}%
Choose $\delta >0$ small such that (\ref{expansion-g}) and (\ref%
{expansion-g'}) hold true in $I=\left( \eta ^{\ast }-\delta ,\eta ^{\ast
}+\delta \right) $. In particular, $g^{\prime }\left( \eta \right) >0\ $when
$\eta \in I$ and $\eta \neq \eta ^{\ast }$, thus $g\left( \eta \right) $ is
monotone in $I$. For a function $f$ with its support in $I$, letting $%
u=g(\eta )$ we have
\begin{align*}
\hat{\psi}(t;k,ct)& =\frac{1}{\sqrt{2\pi }}\int_{-\infty }^{\infty }f(\eta
)e^{i\lambda t+i\eta ct}\mathrm{d}\eta \\
& =\frac{1}{\sqrt{2\pi }}\int_{-\infty }^{\infty }f(g^{-1}(u))e^{iut}\frac{1}{%
g^{\prime }\left( g^{-1}(u)\right) }\mathrm{d}u.
\end{align*}%
In the above, $\frac{1}{g^{\prime }\left( g^{-1}(u)\right) }$ has
singularity at $u^{\ast }=g(\eta ^{\ast })=\frac{4\sqrt{2}}{3\sqrt{3}}N$.
Since
\begin{equation*}
u=g\left( \eta \right) =u^{\ast }+O(\eta -\eta ^{\ast })^{3},\ \eta \in I,
\end{equation*}
so the order of singularity is
\begin{equation}
\frac{1}{g^{\prime }\left( g^{-1}(u)\right) }=O\left( \frac{1}{\left\vert
\eta -\eta ^{\ast }\right\vert ^{2}}\right) =O\left( \frac{1}{\left\vert
u-u^{\ast }\right\vert ^{\frac{2}{3}}}\right) .
\label{asymptotics-g'-inverse}
\end{equation}%
Choose
\begin{equation*}
f(\eta )=
\frac{g^{\prime }\left( \eta \right) }{\left\vert g\left( \eta
\right) -u^{\ast }\right\vert ^{\frac{2}{3}}}\chi _{I}\left( \eta \right) =%
\frac{g^{\prime }\left( g^{-1}(u) \right) }{\left\vert u-u^{\ast }\right\vert ^{\frac{2}{3}}}\chi _{I}\left( \eta \right) ,
\end{equation*}%
which by (\ref{asymptotics-g'-inverse}) is smooth in its support $I$. Hence
the inverse Fourier transform of $f\ $is smooth, and has finite $H_{y}^{s}$
norm for arbitrarily $s>0$. By (\ref{expansion-g}),
\begin{equation*}
a_{-}=g\left( \eta ^{\ast }-\delta \right) -g\left( \eta ^{\ast }\right)
<0,\ a_{+}=g\left( \eta ^{\ast }+\delta \right) -g\left( \eta ^{\ast
}\right) >0.
\end{equation*}%
Therefore, we have
\begin{align*}
\hat{\psi}(t;k,ct)& =\frac{1}{2\pi }\int_{g\left( \eta ^{\ast }-\delta
\right) }^{g\left( \eta ^{\ast }+\delta \right) }\frac{1}{\left\vert
u-u^{\ast }\right\vert ^{\frac{2}{3}}}e^{iut}\mathrm{d}u \\
& =\frac{e^{iu^{\ast }t}}{2\pi }\int_{a_{-}}^{a_{+}}\xi ^{-\frac{2}{3}%
}e^{i\xi t}\mathrm{d}\xi \\
& =\frac{e^{iu^{\ast }t}}{2\pi t^{\frac{1}{3}}}\int_{a_{-}t}^{a_{+}t}\left.
\xi ^{\prime }\right. ^{-\frac{2}{3}}e^{i\xi ^{\prime }}\mathrm{d}\xi
^{\prime },
\end{align*}%
while
\begin{equation*}
\lim_{t\rightarrow +\infty }\int_{a_{-}t}^{a_{+}t}\left. \xi ^{\prime
}\right. ^{-\frac{2}{3}}e^{i\xi ^{\prime }}\mathrm{d}\xi ^{\prime
}=\int_{-\infty }^{\infty }x^{-\frac{2}{3}}e^{ix}\mathrm{d}x=\sqrt{3}\Gamma
\left( \frac{1}{3}\right) .
\end{equation*}%
Therefore, $\Vert \hat{\psi}(t;k,\cdot )\Vert _{L_{y}^{\infty }}$ cannot
decay faster than $t^{-\frac{1}{3}}$.

\begin{remark}
The optimal $t^{-\frac{1}{3}}$ decay obatined above for $\left( x,y\right)
\in \mathbb{T\times R}$ is essentially for the one dimensional case (in $y$%
). By contrast, in \cite{elgindi-sima} the dispersive decay of solutions of (%
\ref{eqn-RT-B-1})-(\ref{eqn-RT-B-2}) was shown to be $t^{-\frac{1}{2}}$ for
the 2D case, i.e., $\left( x,y\right) \in \mathbb{R}^{2}$. The decay rate in
\cite{elgindi-sima} was obtained by the Littlewood-Paley decomposition and
stationary phase lemma.
\end{remark}

\subsection{Original Euler Equation}

When there is no shear, i.e. $R=0$, the original Euler equations (\ref%
{originalR01}-\ref{originalR02}) become
\begin{align*}
-\beta \partial _{t}\partial _{y}\psi +\partial _{t}\Delta \psi =& -\partial
_{x}\left( \frac{\rho }{\rho _{0}}\right) g, \\
\partial _{t}\left( \frac{\rho }{\rho _{0}}\right) =& \beta \partial
_{x}\psi .
\end{align*}%
Likewise, define $T=\frac{\rho }{\beta \rho _{0}(y)}$, then the equations
read
\begin{equation}
(-\beta \partial _{y}+\Delta )\psi _{t}=-\partial _{x}T\beta g,
\label{eqn-RT-original-1}
\end{equation}%
\begin{equation}
\partial _{t}T=\partial _{x}\psi .  \label{eqn-RT-original-2}
\end{equation}

Let $\Psi =e^{-\frac{1}{2}\beta y}\psi ,\Upsilon =e^{-\frac{1}{2}\beta y}T$,
then the equations (\ref{eqn-RT-original-1})-(\ref{eqn-RT-original-2})
become
\begin{equation}
\left( -\frac{1}{4}\beta ^{2}+\Delta \right) \Psi _{t}=-N^{2}\partial
_{x}\Upsilon ,\ \ \partial _{t}\Upsilon =\partial _{x}\Psi .
\label{eqn-RT-original-weighted}
\end{equation}%
By the Fourier transform $(x,y)\rightarrow (k,\eta )$, we have
\begin{equation*}
\left( -\frac{1}{4}\beta ^{2}+(i\eta )^{2}+(ik)^{2}\right) \hat{\Psi}%
_{t}=-(ik)N^{2}\hat{\Upsilon},\ \ \ \hat{\Upsilon}_{t}=(ik)\hat{\Psi}.
\end{equation*}%
%
%
%
%
Therefore,
\begin{equation*}
\frac{d^{2}}{dt^{2}}\hat{\Psi}=-\lambda ^{2}\hat{\Psi},
\end{equation*}%
where
\begin{equation*}
\lambda ^{2}=\frac{k^{2}N^{2}}{k^{2}+\eta ^{2}+\frac{\beta ^{2}}{4}}.
\end{equation*}%
Its solutions are
\begin{equation*}
\hat{\Psi}(t)=C_{1}e^{i\lambda t}+C_{2}e^{-i\lambda t},
\end{equation*}%
where
\begin{equation*}
C_{1,2}=\frac{1}{2}\left( \hat{\Psi}^{0}\pm \frac{\lambda }{k}\hat{\Upsilon}%
^{0}\right) .
\end{equation*}%
Similar to the Boussinesq case, we have the following conservation law for (%
\ref{eqn-RT-original-weighted})%
\begin{equation*}
0=\frac{\mathrm{d}}{\mathrm{d}t}\left( \iint \left( \frac{1}{4}\beta
^{2}\left\vert \Psi \right\vert ^{2}+\left\vert \nabla \Psi \right\vert
^{2}+N^{2}\left\vert \Upsilon \right\vert ^{2}\right) \mathrm{d}x\mathrm{d}%
y\right) .
\end{equation*}%
By integration by parts,
\begin{align*}
& \iint \left( \frac{1}{4}\beta ^{2}\left\vert \Psi \right\vert
^{2}+\left\vert \nabla \Psi \right\vert ^{2}+N^{2}\left\vert \Upsilon
\right\vert ^{2}\right) \mathrm{d}x\mathrm{d}y \\
=& \left\Vert e^{-\frac{1}{2}\beta y}v^{x}\right\Vert
_{L^{2}}^{2}+\left\Vert e^{-\frac{1}{2}\beta y}v^{y}\right\Vert _{L^{2}}^{2}+%
\frac{g}{\beta }\left\Vert e^{-\frac{1}{2}\beta y}\frac{\rho }{\rho _{0}}%
\right\Vert _{L^{2}}^{2}.
\end{align*}%
This shows that there is no decay in the $L^{2}$ norm for $e^{-\frac{1}{2}%
\beta y}\boldsymbol{v}$ and $e^{-\frac{1}{2}\beta y}\frac{\rho }{\rho _{0}}$. For the $%
L^{\infty }$ decay, notice that
\begin{equation*}
\lambda ^{2}=\frac{k^{2}N^{2}}{k^{2}+\eta ^{2}+\frac{\beta ^{2}}{4}}=\frac{%
m^{2}\left( \kappa N\right) ^{2}}{m^{2}+\eta ^{2}}.
\end{equation*}%
where $m=\sqrt{\frac{1}{4}\beta ^{2}+k^{2}},\kappa =\frac{k}{m}$. By lemma %
\ref{oscillatingIntegral} we have
\begin{align*}
\left\vert \int_{-n}^{n}e^{i(\lambda t+\eta y)}\mathrm{d}\eta \right\vert
\lesssim & |m|^{\frac{3}{2}}|\kappa Nt|^{-\frac{1}{3}}+|\kappa Nt|^{-\frac{1%
}{2}}|m|^{-\frac{1}{2}}n^{\frac{3}{2}} \\
\simeq & |k|^{\frac{3}{2}}|t|^{-\frac{1}{3}}+|t|^{-\frac{1}{2}}|k|^{-\frac{1%
}{2}}n^{\frac{3}{2}},
\end{align*}%
since $\kappa \simeq 1,m\simeq k$. Accordingly, we have
\begin{align*}
\Vert e^{-\frac{1}{2}\beta y}P_{\neq 0}\psi \Vert _{L_{x}^{2}L_{y}^{\infty
}}\lesssim & |t|^{-\frac{1}{3}}\left( \Vert \Psi ^{0}\Vert
_{H_{x}^{3/2}H_{y}^{7/2}}+\Vert \Psi ^{0}\Vert _{H_{x}^{3/2}L_{y}^{1}}\right.
\\
& \left. +\Vert \Upsilon ^{0}\Vert _{H_{x}^{1/2}H_{y}^{7/2}}+\Vert \Upsilon
^{0}\Vert _{H_{x}^{1/2}L_{y}^{1}}\right) , \\
\Vert e^{-\frac{1}{2}\beta y}P_{\neq 0}v^{x}\Vert _{L_{x}^{2}L_{y}^{\infty
}}\lesssim & |t|^{-\frac{1}{3}}\left( \Vert \Psi ^{0}\Vert
_{H_{x}^{3/2}H_{y}^{9/2}}+\Vert \Psi ^{0}\Vert
_{H_{x}^{3/2}W_{y}^{1,1}}\right. \\
& \left. +\Vert \Upsilon ^{0}\Vert _{H_{x}^{1/2}H_{y}^{9/2}}+\Vert \Upsilon
^{0}\Vert _{H_{x}^{1/2}W_{y}^{1,1}}\right) , \\
\Vert e^{-\frac{1}{2}\beta y}v^{y}\Vert _{L_{x}^{2}L_{y}^{\infty }}\lesssim
& |t|^{-\frac{1}{3}}\left( \Vert \Psi ^{0}\Vert
_{H_{x}^{5/2}H_{y}^{7/2}}+\Vert \Psi ^{0}\Vert _{H_{x}^{5/2}L_{y}^{1}}\right.
\\
& \left. +\Vert \Upsilon ^{0}\Vert _{H_{x}^{3/2}H_{y}^{7/2}}+\Vert \Upsilon
^{0}\Vert _{H_{x}^{3/2}L_{y}^{1}}\right) , \\
\Vert e^{-\frac{1}{2}\beta y}P_{\neq 0}T\Vert _{L_{x}^{2}L_{y}^{\infty
}}\lesssim & |t|^{-\frac{1}{3}}\left( \Vert \Psi ^{0}\Vert
_{H_{x}^{5/2}H_{y}^{9/2}}+\Vert \Psi ^{0}\Vert
_{H_{x}^{5/2}W_{y}^{1,1}}\right. \\
& \left. +\Vert \Upsilon ^{0}\Vert _{H_{x}^{3/2}H_{y}^{7/2}}+\Vert \Upsilon
^{0}\Vert _{H_{x}^{3/2}L_{y}^{1}}\right) .
\end{align*}

\begin{center}
{\Large Acknowledgement}
\end{center}

Yang is supported in part by China Scholarship Council. He would like to
thank Dongsheng Li for helpful suggestions. Lin is supported in part by a
NSF grant DMS-1411803.


\bibliographystyle{abbrv}
\bibliography{reg}

\end{document}